\documentclass{article}

\usepackage{amssymb}
\usepackage{amsmath}
\usepackage{amsthm}
\usepackage{graphicx}
\usepackage{url}

\theoremstyle{definition}
\newtheorem{theorem}{Theorem}

\newtheorem{definition}{Definition}

\bibliographystyle{plain}

\title{Graph Nimors}

\author{Matthew Skala\\mskala@ansuz.sooke.bc.ca}

\newcommand{\mex}{\mathop{\mathrm{mex}}}

\begin{document}

\maketitle


\begin{abstract}
In the game of \emph{Graph Nimors}, two players
alternately perform graph minor operations (deletion and contraction of
edges) on a graph until no edges remain, at which point the player who last
moved wins.  We present theoretical and experimental results and
conjectures regarding this game.
\end{abstract}


\section{Introduction}

\emph{Graph Nimors} is a combinatorial game in which two players take
alternating turns performing graph minor operations on a graph until no
edges remain, at which point the player who last moved wins.  Although the
rules of Graph Nimors are simple and seem obvious, it appears to be novel,
and its analysis appears to be difficult.  A simple online version, coded by
Martin Aum\"{u}ller, is online at {\url{http://itu.dk/people/mska/nimors/}}. 
How does one play this game well?

All our graphs are simple (no multiple edges) and undirected.  A \emph{graph
minor operation} consists of \emph{deleting} an edge or \emph{contracting}
an edge---that is, removing two adjacent vertices $u$ and $v$ and inserting
a new vertex $w$ adjacent to the union of the neighbourhoods of $u$ and $v$
in the original graph (excluding $u$ and $v$ themselves).  We will often
make use of the \emph{blocks} of a graph, which are maximal biconnected
subgraphs, allowing single edges as blocks, so that the edges of any graph
can be partitioned into a disjoint union of blocks.

Let $C_n$ denote the cycle of $n$ vertices and $n$ edges; $K_n$ denote
the complete graph on $n$ vertices (which has $n(n-1)/2$ edges); and
$K_{p,q}$ the complete bipartite graph with parts of $p$ and $q$
vertices (which has $p+q$ vertices and $pq$ edges).  The \emph{girth} of a
graph is the number of vertices in its smallest cycle, or $\infty$ if the
graph is acyclic.

Let $\oplus$ be the \emph{Nim sum}, a binary operator on
nonnegative integers usually described as ``binary addition without carry'';
it is also equivalent to bitwise exclusive OR, as in the C language
\verb|^| operator.  The ordinary sum is an upper bound on the Nim
sum.  Given a set $S$, let $\mex S$ be the least nonnegative
integer that is \emph{not} an element of $S$.  Note
$\mex\varnothing=0$ and $\mex S \le |S|$.  It is not necessary for all
elements of $S$ to be integers.  Other objects might occur when the theory is
extended to broader classes of games; but the $\mex$ of a set is by
definition a nonnegative integer.  Writing $\mex$ in Roman type is
the standard notation for this function, as used by other
authors~\cite{Berlekamp:Winning} and consistent with analogous functions
like $\max$ and $\min$.

A combinatorial game consists of a set of \emph{positions} with rules
describing, for any position $p$, sets of positions that are called
\emph{options} of $p$ for the Left player and for the Right player (thus, two
directed graphs).  If for all positions the Left and Right options are the
same, then the game is called \emph{impartial}; otherwise, \emph{partisan}. 
If for every position $p$, all directed walks starting from $p$ are of
finite length, then the game is \emph{short}.  All games we consider are
\emph{perfect-information} games, which means that each player knows the
current position (instead of only some function of it) when choosing which
move to make, and none involve random selections outside the players' control.

If, from a position in a combinatorial game, the next player to move can
win in all cases of the opponent's choices, then that is called an
$\mathcal{N}$-position (mnemonic: Next player to win).  If the other
player can win in all cases of the next player's choices, then that is
called a $\mathcal{P}$-position (Previous player to win).  In short impartial
games, every position is in one of these two classes; other kinds of games
admit other possibilities.  The \emph{standard play} convention is that
positions with no options, at which play necessarily terminates, are
$\mathcal{P}$-positions: a player unable to move loses.  The
\emph{mis\`{e}re play} convention is the opposite, with positions that have
no options defined to be $\mathcal{N}$-positions and a player unable to move
declared the winner.

The literature on combinatorial games is massive, and we survey only a few
of the most relevant results here.  The literature on graph minors is even
bigger; but as we use very little from that work here except for
starting from the idea of a ``graph minor operation,'' we will only refer
readers to the survey by Lovasz~\cite{Lovasz:Graph}.

The general theory of Nim-like games owes much to the theoretical work of
Sprague~\cite{Sprague:Mathematische} and Grundy~\cite{Grundy:Mathematics}
and the popular survey \emph{Winning Ways} of Berlekamp, Conway, and
Guy~\cite{Berlekamp:Winning}.  Graph Nimors as such appears to be novel, but
many other Nim-like games involving graphs are known.  \emph{Hackenbush},
which involves deleting subgraphs from a graph, is a constantly-used example
and reference point for putting values on other games in \emph{Winning
Ways}.

As Demaine~\cite{Demaine:Playing} describes, it is typical for short
two-player games to be PSPACE-complete.  Schaefer~\cite{Schaefer:Complexity}
shows PSPACE-completeness of several graph games including \emph{Geography},
in which players move a token from vertex to vertex of a directed graph,
never repeating an arc; and \emph{Node Kayles}, where a move is to claim a
vertex not adjacent to any already-claimed vertex (thus building an
independent set).  Fraenkel and Goldschmidt~\cite{Fraenkel:PSPACE} show
PSPACE-hardness for several more classes of games involving moving tokens
and marking vertices in graphs.  Bodlaender~\cite{Bodlaender:Complexity}
describes a graph colouring game in which players take turns colouring
vertices without giving any two adjacent vertices the same colour; the
number of colours needed for the first player to force a complete colouring
is a natural graph invariant related to this game.  Bodlaender shows that
the variant in which the order of colouring vertices is predetermined, is
PSPACE-complete, and gives partial results for variants without that
restriction.

Fukuyama~\cite{Fukuyama:Nim} describes \emph{Nim on graphs}, where each edge
of a graph contains a Nim pile and players take turns moving a token from
vertex to vertex, subtracting from the pile on each edge traversed.  If
every pile is of size 1 and the graph is made directed, this is the same as
Geography.  Calkin et al.~\cite{Calkin:Computing} describe \emph{Graph Nim},
in which a move consists of choosing one vertex and removing any nonempty
subset of the edges incident to it; in the case of paths, this is easily
seen to be equivalent to the take-and-break game Kayles~\cite[Chapter
4]{Berlekamp:Winning}.  Fraenkel and Scheinerman~\cite{Fraenkel:Deletion}
describe a deletion game on hypergraphs, with moves consisting of removing
vertices or hyperedges.  Harding and Ottaway~\cite{Harding:Edge} describe
edge-deletion games with constraints on the parity of the degrees of the
endpoints of the edges that may be deleted.  Henrich and
Johnson~\cite{Henrich:Link} describe a \emph{link smoothing} game, in which
players make ``smoothing'' moves on a planar embedding that represents the
shadow of a link diagram, attempting to either disconnect the diagram or
keep it connected.  Their work is of interest in the context of ours because
the smoothing moves are sometimes equivalent to edge contraction in a graph
representing the game state.  Few other games involving edge contraction are
known.


\section{Basic theory of Graph Nimors}

There is a general
theory~\cite{Sprague:Mathematische,Grundy:Mathematics,Berlekamp:Winning}
for a class of games that includes
Graph Nimors, summarized by the following well-known result.

\begin{theorem}[Sprague-Grundy Theorem]
For any short impartial two-player perfect-information combinatorial game
with the standard play convention and without randomness, there exists a
unique function $\mathcal{G}$ from positions to nonnegative integers,
called the \emph{Nim value} or \emph{Sprague-Grundy number}, with the
following properties where $G$ is any position of the game:
\begin{itemize}
\item If the options from $G$ are $G_1,G_2,$ $\ldots,G_k$, then
$\mathcal{G}(G)=\mex
\{\mathcal{G}(G_1),\mathcal{G}(G_2),$ $\ldots,\mathcal{G}(G_k)\}$.  This
implies $\mathcal{G}(G)=0$ if there are no options from $G$, because $\mex
\varnothing = 0$.
\item $\mathcal{G}(G)=0$ if and only if $G$ is a $\mathcal{P}$-position.
\item If $G$ can be separated into a union of two positions $G'$
and $G''$, such that each player's turn consists of moving in exactly one of
the sub-positions and where no
sequence of moves in one will affect the moves available in the other, then
$\mathcal{G}(G)=\mathcal{G}(G')\oplus\mathcal{G}(G'')$.
\item Optimal play is to move to any position of zero Nim value, which
is possible if and only if the current Nim value is nonzero.  Then the
opponent either loses immediately or is forced to move to a position of
nonzero Nim value, at which point one can apply the strategy again.
\end{itemize}
\end{theorem}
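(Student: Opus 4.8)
The plan is to construct $\mathcal{G}$ by well-founded recursion on positions, exploiting the hypothesis that the game is short. First I would observe that the ``is an option of'' relation is well-founded: since every directed walk from any position has finite length, there is no infinite descending chain of options, so each position can be assigned an ordinal rank and arguments can proceed by induction on that rank. This lets me define $\mathcal{G}(G)=\mex\{\mathcal{G}(G_1),\ldots,\mathcal{G}(G_k)\}$ unambiguously, because every option $G_i$ has strictly smaller rank; the value is a nonnegative integer since the $\mex$ of a finite set of nonnegative integers is one. The same induction yields uniqueness: any function obeying the first bullet must agree with this one at every position, because it agrees on all options by the inductive hypothesis, and the first bullet then pins down its value. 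This settles existence, uniqueness, and the first property at once.

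Next I would establish the second bullet, that $\mathcal{G}(G)=0$ holds if and only if $G$ is a $\mathcal{P}$-position, again by induction on rank. In the base case a position with no options has $\mathcal{G}(G)=\mex\varnothing=0$ and is a $\mathcal{P}$-position by the standard-play convention, so the equivalence holds. For the inductive step, if $\mathcal{G}(G)=0$ then by the definition of $\mex$ no option has value $0$, so by induction every option is an $\mathcal{N}$-position; whatever the mover does hands the opponent a winning position, making $G$ a $\mathcal{P}$-position. Conversely, if $\mathcal{G}(G)\neq 0$ then $0$ lies in the set of option values, so some option is by induction a $\mathcal{P}$-position; the mover goes there and wins, making $G$ an $\mathcal{N}$-position. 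The fourth bullet is then an immediate corollary: a nonzero value guarantees a zero-value option leading the opponent into a $\mathcal{P}$-position, a zero value forbids any zero-value option, and the repeated-application claim is just iterating this down a finite play.

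The third bullet, additivity under disjoint sums $\mathcal{G}(G'+G'')=\mathcal{G}(G')\oplus\mathcal{G}(G'')$, is the real content and the step I expect to be the main obstacle. Writing $a=\mathcal{G}(G')$ and $b=\mathcal{G}(G'')$, I would prove $\mathcal{G}(G'+G'')=a\oplus b$ by verifying the two halves of the $\mex$ characterisation, using induction on the combined rank so that every option value is already known to be additive. Every option of $G'+G''$ moves in exactly one component, so it has value $\mathcal{G}(G'_i)\oplus b$ or $a\oplus\mathcal{G}(G''_j)$. For the upper bound I would show no option equals $a\oplus b$: an equality $\mathcal{G}(G'_i)\oplus b=a\oplus b$ cancels to $\mathcal{G}(G'_i)=a$, contradicting that $a$ is the $\mex$ of the values of $G'$'s options and so omits $a$; the symmetric case is identical. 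For the lower bound I would show every $c<a\oplus b$ is attained: letting $d=c\oplus a\oplus b$, the leading one-bit of $d$ sits where $a\oplus b$ is $1$ and $c$ is $0$, hence is set in exactly one of $a,b$, say $a$; then $a\oplus d<a$, so the $\mex$ property (which guarantees every value below $a$ occurs among $G'$'s option values) supplies an option $G'_i$ with $\mathcal{G}(G'_i)=a\oplus d$, and $G'_i+G''$ has value $(a\oplus d)\oplus b=c$. The delicate point is the bitwise comparison justifying $a\oplus d<a$, which I would carry out by tracking the highest bit in which $c$ and $a\oplus b$ differ. With both bounds in hand the $\mex$ equals $a\oplus b$, completing the induction and the theorem.
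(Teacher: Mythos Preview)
Your proof is correct and follows the standard Sprague--Grundy argument: well-founded recursion to define $\mathcal{G}$, induction for uniqueness and the $\mathcal{P}$-position characterisation, and the usual two-part $\mex$ verification (no option hits $a\oplus b$; every $c<a\oplus b$ is reached via the leading-bit trick) for additivity under disjoint sums. The bitwise comparison you flag as delicate is handled correctly.

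There is nothing to compare against, however: the paper does not prove this theorem. It is stated as a well-known result with citations to Sprague, Grundy, and \emph{Winning Ways}, and the paper immediately moves on to applying it. Your write-up is essentially the classical proof one finds in those references, so it is consistent with what the paper invokes, but the paper itself supplies no argument of its own.
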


The prototype game meeting these conditions is \emph{Nim}:  a position is
some number of piles of stones, with the legal move being to remove any
nonempty subset of any one pile.  In that game the Nim value of a single
pile is simply the number of stones in it.  The Nim sum rule above is used
to evaluate multi-pile configurations, and that gives an easy winning
strategy.

The game of Graph Nimors also meets the conditions.  Blocks serve to
partition the graph.  No sequence of moves in one block can affect the moves
available in any other blocks.  Therefore the Nim value of a graph is the
Nim sum of the Nim values of its blocks.  Assuming we can easily find the
Nim values of biconnected graphs, we can compute them for any other
graphs, and thereby play optimally from any $\mathcal{N}$-position.

However, the only obvious way to compute the Nim value of a general
biconnected graph is to recursively examine all its minors, which is
prohibitively time-consuming in all but the smallest cases.

\subsection{Easy cases}\label{sub:easy-cases}

For very small graphs, the Nim value is easy to calculate by brute force. 
All biconnected graphs of up to four vertices are shown in
Figures~\ref{fig:4vert}, with arrows among graphs to
show the options from each position and a few extra graphs to illustrate
non-biconnected options for the four-vertex graphs.  Note that breaking
apart the blocks into separate components makes no difference to the Nim
value, and we do that in the figure to make the boundaries between blocks as
clear as possible.  The Nim value of each biconnected graph is the mex of
the Nim values for its options.  The biconnected graphs of five vertices and
their Nim values are shown in Figure~\ref{fig:5vert}, but even for graphs
as small as these, there are so many options that showing them all would
make the diagram excessively complicated.

\begin{figure}
\includegraphics[scale=0.78]{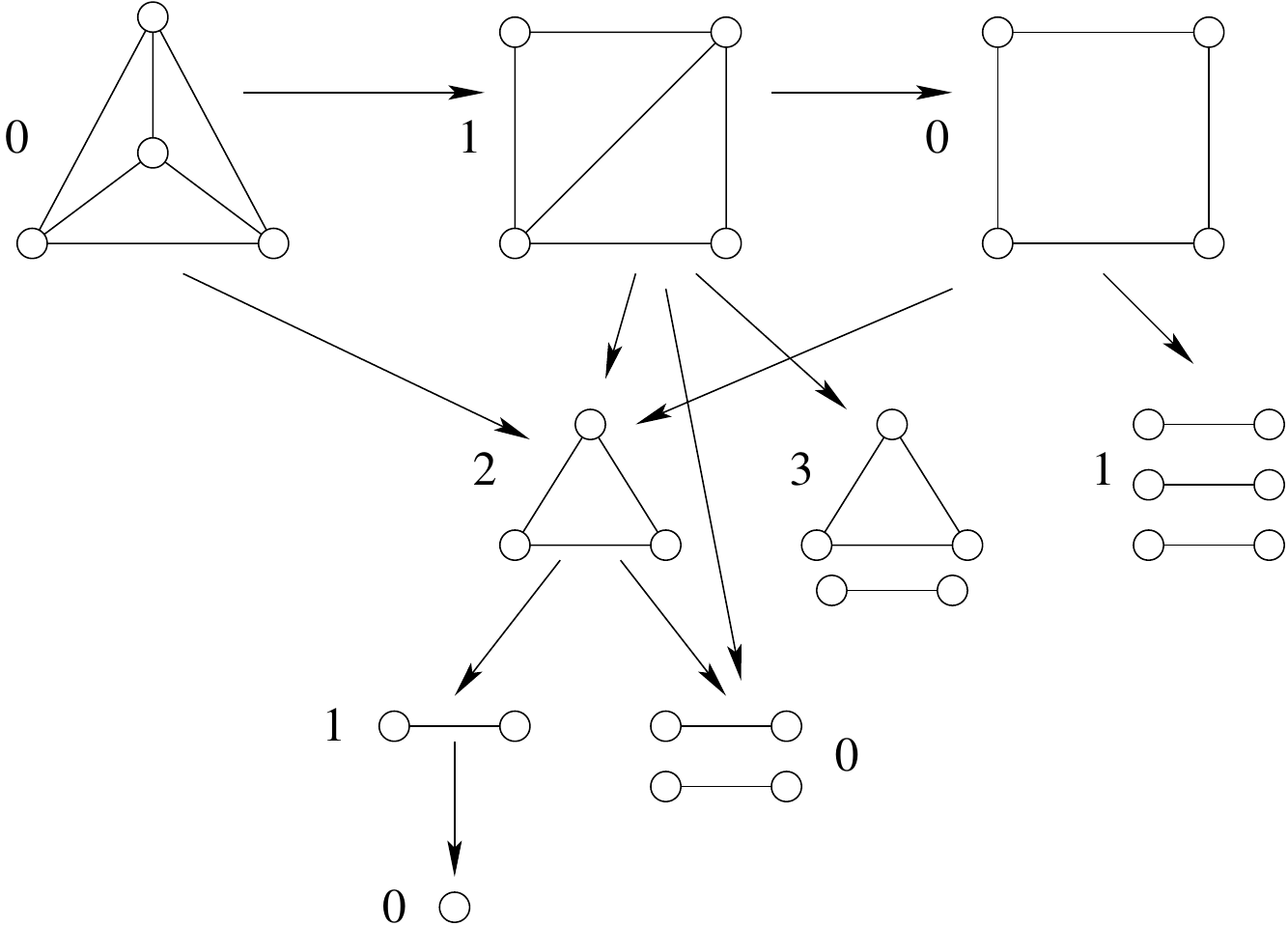}
\caption{The biconnected graphs of up to four vertices, and their Nim
values.}\label{fig:4vert}
\end{figure}

\begin{figure}
\includegraphics[scale=0.78]{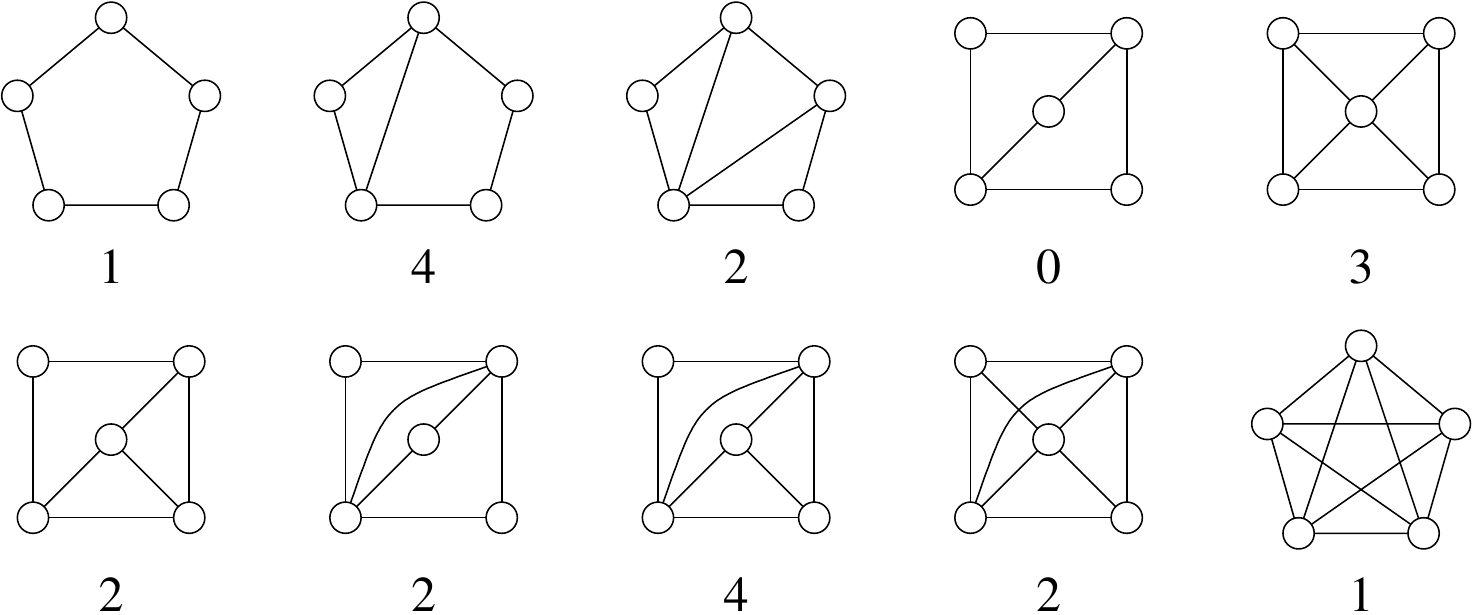}
\caption{The biconnected graphs of five vertices, and their Nim
values.}\label{fig:5vert}
\end{figure}

On an acyclic graph, every move reduces the edge count by exactly one.  The
game ends when the edges are exhausted, and the players' choices to delete
or contract edges make no difference to the final result.  The Nim value of
an acyclic graph is 0 if the number of edges is even, 1 if odd.  A graph
with no edges has Nim value 0 (no moves possible); with one edge, Nim value
1 ($\mex \{0\} = 1$), and then for larger acyclic graphs, each of the edges
is a block and we take the Nim sum of an even or odd number of them.

The Nim value of $C_3$ is 2, because its options are paths of
one and two edges, which have Nim values of 1 and 0, and $\mex \{0,1\}=2$. 
The Nim value of $C_4$ is 0 because its options are $C_3$ and a three-edge
path, and $\mex \{1,2\}=0$.  Larger cycles $C_k$ have Nim value 0 for even
$k$, 1 for odd $k$, by an easy induction.

Not many other cases can really be called ``easy.''  Even such a simple
thing as two cycles sharing one edge (equivalent to a cycle with a chord
across it) requires more than trivial work to analyse.

\begin{theorem}\label{thm:fused-cycle}
Let $FC_{p,q}$ (mnemonic:  ``fused cycle'')
be the graph of $p+q-2$ vertices and $p+q-1$ edges formed by
identifying one edge of $C_p$ with one edge of $C_q$.  Without loss of
generality assume $p\le q$.  Then
\begin{equation}
\begin{aligned}
  \mathcal{G}(FC_{3,3}) &=1 \\
  \mathcal{G}(FC_{3,4}) &=4 \\
  \mathcal{G}(FC_{3,q}) &=2 \text{ for odd } q\ge 5\\
  \mathcal{G}(FC_{3,q}) &=3 \text{ for even } q\ge 6\\
  \mathcal{G}(FC_{p,q}) &=0 \text{ for odd } p+q
    \text{ when } p\ge 4,\ q\ge 4\\
  \mathcal{G}(FC_{p,q}) &=1 \text{ for even } p+q
    \text{ when } p\ge 4,\ q\ge 4.\\
\end{aligned}
\end{equation}
\end{theorem}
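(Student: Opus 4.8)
The plan is to read off the options of $FC_{p,q}$ explicitly, use the block-decomposition rule to reduce every option to a Nim sum of cycle values and single-edge values, and then run a short layered induction. Writing $u,v$ for the two shared vertices, $FC_{p,q}$ is the shared edge $uv$ together with a path of $p-1$ edges from $u$ to $v$ (the remainder of $C_p$) and a path of $q-1$ edges from $u$ to $v$ (the remainder of $C_q$). There are only six kinds of move, and I would show each reduces cleanly. Deleting the shared edge splices the two paths into a single cycle $C_{p+q-2}$. Deleting any edge of the $(q-1)$-path leaves $C_p$ intact with the remaining $q-2$ edges hanging off as single-edge tree blocks, giving value $\mathcal{G}(C_p)\oplus((q-2)\bmod 2)$ regardless of which edge was chosen, and deleting a $(p-1)$-path edge is symmetric. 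Contracting any path edge merely shortens that path, yielding $FC_{p-1,q}$ or $FC_{p,q-1}$ (with a length-two path collapsing, under simplification of the doubled edge, so that $FC_{2,q}=C_q$). Contracting the shared edge identifies $u$ and $v$, turning the two paths into cycles $C_{p-1}$ and $C_{q-1}$ meeting at a single cut vertex, so its value is $X_0:=\mathcal{G}(C_{p-1})\oplus\mathcal{G}(C_{q-1})$. All the background needed—value as Nim sum over blocks, a path of $\ell$ edges contributing $\ell\bmod 2$, and the cycle values—is already available from the easy cases.

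The central phenomenon to track is the anomaly $\mathcal{G}(C_3)=2$, in contrast to $\mathcal{G}(C_k)\in\{0,1\}$ for $k\ge 4$. When $p,q\ge 4$ every cycle above except possibly $C_{p-1},C_{q-1}$ has length at least $4$, so all three deletions collapse to the single value $d:=(p+q)\bmod 2$; but when $p=3$ or $q=3$ a deleted triangle injects a $2$ or $3$, and when $p=4$ or $q=4$ the contracted shared edge produces a $C_3$ inside $X_0$, again injecting a $2$ or $3$. These injected values are exactly what force the exceptional entries of the table.

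Concretely I would proceed in three stages. First, compute $\mathcal{G}(FC_{3,3})=1$ and $\mathcal{G}(FC_{3,4})=4$ directly from the option analysis; in the latter the deletions contribute $\{1,2\}$ and the contractions contribute $\{0,1,3\}$, whose union is $\{0,1,2,3\}$ with $\mex$ equal to $4$. Second, establish the $FC_{3,q}$ row by induction on $q$: for $q\ge 5$ the options reduce to a fixed set together with $\mathcal{G}(FC_{3,q-1})$, and the $\mex$ comes out $2$ for odd $q$ and $3$ for even $q$, with $FC_{3,3}$ and $FC_{3,4}$ as base. Third, handle $p,q\ge 4$ by induction on $p+q$. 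In the interior $p,q\ge 5$ every option—all three deletions, the contraction $X_0$, and the smaller fused cycles $FC_{p-1,q},FC_{p,q-1}$ (which by induction equal $d$, being of opposite parity)—reduces to $d$, so $\mathcal{G}(FC_{p,q})=\mex\{d\}=1-d$, matching the claim. The boundary rows $p=4$ (and symmetrically $q=4$) and the corner $FC_{4,4}$ are where the anomaly surfaces: there $X_0$ and the $FC_{3,\cdot}$ entries contribute the values $2,3,4$, but these all lie at least $2$ while $d\in\{0,1\}$ is present and $1-d$ is absent, so the option set is $\{d\}\cup(\text{values}\ge 2)$ and the $\mex$ is still $1-d$. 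I would verify $FC_{4,4}$ explicitly (options $\{0,4\}$, $\mex=1$) as the induction base.

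The hard part will be the bookkeeping of the $C_3$ anomaly across these boundary cases: the same recursion produces the scattered small values $1,2,3,4$ near $p=3$ or $q=3$ before settling into the clean $0/1$ pattern in the interior, and the real content of the proof is checking that the injected values $2,3,4$ never fall into the $\mex$ gap that determines the interior answer. A secondary point requiring care is justifying the degenerate collapses—length-two paths becoming single edges under contraction, with the resulting would-be multi-edges simplified—so that the reductions to $FC_{p-1,q}$, $FC_{p,q-1}$, and ultimately to the $FC_{3,\cdot}$ base row remain valid.
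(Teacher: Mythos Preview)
Your proposal is correct and follows essentially the same route as the paper: enumerate the six move types on $FC_{p,q}$, reduce each option via block decomposition to cycle and single-edge values, then run the same layered induction (base cases $FC_{3,3}$, $FC_{3,4}$; the $FC_{3,q}$ row by induction on $q$; then $p,q\ge 4$ by induction on $p+q$ with the $p=4$ boundary handled by noting the injected values lie $\ge 2$). Your framing in terms of the single parity bit $d=(p+q)\bmod 2$ and the explicit interior/boundary split is a bit more systematic than the paper's prose, but the argument is the same.
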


\begin{proof}
For the case $FC_{3,3}$:  there are two kinds of edges, each of which may be
removed or contracted.  Removing the centre edge leaves $C_4$ with Nim value
0.  Removing a side edge leaves $C_3$ plus one edge, with Nim value $2
\oplus 1 = 3$.  Contracting the centre edge leaves two edges, with Nim value
$1 \oplus 1 = 0$.  Contracting a side edge leaves $C_3$ with Nim value 2. 
Then $\mex \{0,2,3\}=1$.

For the case $FC_{3,4}$:  We can delete or contract one of the edges that
came only from
$C_3$, from $C_4$, or the shared edge (six moves in all).  Deleting an edge
from $C_3$ leaves $C_4$ and one edge as blocks, total Nim value 1.  Deleting
an edge from $C_4$ leaves $C_3$ and two edges as blocks, total Nim value 2. 
Deleting the shared edge leaves $C_6$, Nim value 0.  Contracting an edge
from $C_3$ leaves $C_4$, Nim value 0.  Contracting an edge from $C_4$ leaves
$FC_{3,3}$, Nim value 1 (above).  Contracting the shared edge leaves $C_3$
plus an edge, Nim value 3.  Then $\mex \{0,1,2,3\}=4$.

For the case $FC_{3,q}$, $q\ge 5$:  Assume the theorem is true for smaller
$q$.  Deleting an edge from $C_3$ leaves $C_q$ plus an edge, Nim value 0 or
1 with the opposite parity from $q$.
Contracting an edge from $C_3$ leaves just $C_q$, Nim
value 0 or 1 with the \emph{same} parity as $q$.
Thus, these two cases together cover the Nim values 0
and 1.  Deleting or merging the shared edge leaves a cycle of length at
least $4$ and possibly an extra dangling edge; the Nim value of the result
is 0 or 1, and already covered.  Deleting an edge from $C_q$ leaves a
triangle and $q-2$ edges as blocks, with Nim value 2 for even $q$ and 3 for
odd $q$.  Merging an edge from $C_q$ leaves $FC_{3,q-1}$, which by the
inductive assumption has the same Nim value as $C_q$ plus an edge, namely 2
for even $q$ (odd $q-1$) and 3 for odd $q$ (even $q-1$), unless it is
$FC_{3,4}$ with Nim value 4.  Thus the values of the options are 0 and 1
unconditionally, exactly one of 2 or 3, and possibly also 4.
The mex of these values is 2 or 3, according to the parity of $q$:  2 for
odd $q$ and 3 for even $q$, and the result holds.

For the case $FC_{p,q}$ with $p\ge 4$ and $q \ge4$:  Assume the theorem is
true for smaller $p$ or $q$.  Deleting an edge from $C_p$ leaves as blocks
$C_q$ and $p-2$ single edges; the Nim value of the result is 0 or 1 with the
same parity as $p+q$.
Symmetrically, we get the same Nim value by deleting an edge from
$C_q$.  Deleting the shared edge leaves $C_{p+q-2}$, which also has the
same Nim value.
Contracting an edge in $C_p$ results in $FC_{p-1,q}$,
which by the inductive assumption has Nim value 0 or 1 with the same
parity as $p+q$, or else
greater than 1 (when $p=4$); and the same is true symmetrically of contracting an edge in
$C_q$.  That leaves only contracting the shared edge, which results in
$C_{p-1}$ and $C_{q-1}$ joined by a shared vertex, the Nim value of which
may be 0 or 1 with the same parity as $p+q$, or else (if exactly one
of $p$ and $q$ was equal to 4) a value greater than 1.  Thus the values of
the options are exactly one of the values
$\{0,1\}$ depending on the parity of $p+q$,
and possibly some value or values greater than 1.
The mex of this set is 0 or 1 with the opposite parity from $p+q$, and the
result holds.
\end{proof}

\subsection{Property S}

Girth seems relevant to the analysis of Graph Nimors, both because there are
some girth-related patterns visible in the computer results and because
there are simple statements we can make about the consequences of moves in
the game as they relate to girth.  A deletion move never decreases the
girth.  A contraction move never increases the girth, except in the special
case where it contracts an edge shared by all triangles in the graph, and if
it decreases the girth, it decreases the girth by exactly one.  Any move on
a graph of girth at least four (a triangle-free graph) subtracts exactly one
from the number of edges.

These facts suggest that if the starting girth is
sufficiently large, one player may be able to keep it large as part of a
simple winning strategy.  But actually implementing such a strategy seems
difficult.  For instance, the Petersen graph has girth 5 and Nim value 1. 
The first player, although able to win, cannot prevent the second player
from forming one or more triangles along the way.  The following property is
similar to girth, but represents something one player can preserve as part
of a strategy.

\begin{definition}
A graph $G$ has \emph{property S} (mnemonic: its high-degree
vertices are Separated by Series vertices) if it contains no edge incident
to two vertices of degree greater than two, and no block of $G$ is a
triangle.
\end{definition}

Note that property S implies $G$ is triangle-free.  The important
consequence of property S is that any move which reduces the girth can be
undone on the next move, allowing one player to force an outcome determined
by the parity of the number of edges.

\begin{theorem}\label{thm:property-s}
A graph with property S is an $\mathcal{N}$-position
if and only if it has an even number of edges.
\end{theorem}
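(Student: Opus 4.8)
The plan is to exhibit an explicit pairing strategy for one player, showing that property S is an invariant this player can restore after \emph{every} move of the opponent, so that the whole game collapses to a parity count on the number of edges. Two preliminary observations set this up. First, since property S implies the graph is triangle-free (girth at least four), every single move removes exactly one edge: deletion always does, and contracting an edge $uv$ could collapse parallel edges only if $u$ and $v$ shared a common neighbour, which is impossible in a triangle-free graph. Second, deletion can never leave property S, because deleting an edge cannot create a triangle and only lowers degrees, so it produces neither a triangle block nor an edge joining two vertices of degree greater than two. Hence the \emph{only} move that can destroy property S is a contraction, which is also the only move that can lower the girth.

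The heart of the argument is a repair lemma making precise the remark that any girth-reducing move can be undone on the next move. The plan is to prove that whenever the opponent contracts an edge $uv$ of a property-S graph and the resulting graph $G_1$ fails property S, some single edge of $G_1$ can be deleted to restore property S (removing exactly one further edge). To locate it, let $u$ be the endpoint of $uv$ of smaller degree; property S forbids an edge between two vertices of degree greater than two, so $\deg u\le 2$. If $\deg u=1$ the contraction merely absorbs a pendant vertex and cannot damage property S, so damage forces $\deg u=2$, giving $u$ a unique second neighbour $a$, and the merged vertex $w$ inherits exactly the degree of $v$. The crucial structural claim is that all damage is concentrated on the single edge $wa$: triangle-freeness forbids common neighbours, so the contraction changes no degree except at $w$ and creates no new adjacency except at $w$, whence the only possible new edge between two high-degree vertices is $wa$; and every new triangle must pass through $w$ (old triangles are impossible), while a triangle using two neighbours inherited from $v$ would force those two $v$-neighbours to be adjacent, contradicting triangle-freeness of the original graph, so every triangle through $w$ uses the edge $wa$. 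Deleting $wa$ therefore simultaneously removes the offending high-degree adjacency and destroys every triangle. Verifying this concentration-on-one-edge claim in full --- including the case where one contraction spawns several triangles at once, as happens inside $K_{2,3}$ --- is the step I expect to be the main obstacle.

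With the repair lemma in hand, the strategy is to pair each opponent move with a reply: if the opponent's move preserves property S, reply by deleting any remaining edge (always legal and always property-S-preserving); if it destroys property S, reply with the repair deletion above. Either way property S is restored and, together with the opponent's move, exactly two edges are removed, so the parity of the edge count is preserved from one of this player's turns to the next while property S is maintained all the way down to the empty graph. Consequently the player following this strategy makes the final move exactly when the parity is in their favour (when the count is odd, the first player removes a single edge before taking up the repairing role), so the winner is determined solely by the parity of the number of edges, reducing the analysis to the same parity count already carried out for acyclic graphs and establishing the stated dependence on the parity of the number of edges. It then remains only to note termination, since the edge count strictly decreases, and the trivial base cases of zero and one edges, which are immediate.
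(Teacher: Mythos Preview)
Your proof is correct and follows the same pairing-strategy approach as the paper: respond to each opponent move with a deletion that restores property~S and returns the edge parity to your favour. Your repair lemma is in fact more carefully stated than the paper's, which asserts that any property-S-destroying contraction must produce an edge between two high-degree vertices and thus overlooks the case (e.g.\ contracting an edge of $C_4$ to obtain $C_3$) where the violation is only a triangle block; your argument that all damage is concentrated on the single edge $wa$ covers both failure modes at once.
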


\begin{proof}
Suppose $G$ has property S and an even number of edges.  If the first player
deletes an edge, then the result will have property S and an odd number of
edges, at which point the second player can delete any edge, preserving the
property and making the number of edges even again.  Similarly, if the first
player contracts an edge but leaves a graph that still has property S, then
the second player can delete any edge.

Suppose the first player contracts an edge in such a way that the resulting
graph does not have property S.  Then the first player's move must have
consisted of contracting an edge between a degree-two vertex and one of its
neighbours where both neighbours had degree greater than two, creating a new
edge between two vertices $u$ and $v$ of degree greater than two, as in
Figure~\ref{fig:uv-edge}.  The edge $(u,v)$ is the only one violating
property S.  Then the second player can delete that edge, restoring the
property and making the number of edges even.  By induction, the second
player has a winning strategy on any graph with property S and an even
number of edges.

On a graph with property S and an odd number of edges, the first player can
win by deleting any edge and then following the second-player strategy.
\end{proof}

\begin{figure}
\includegraphics[scale=0.8]{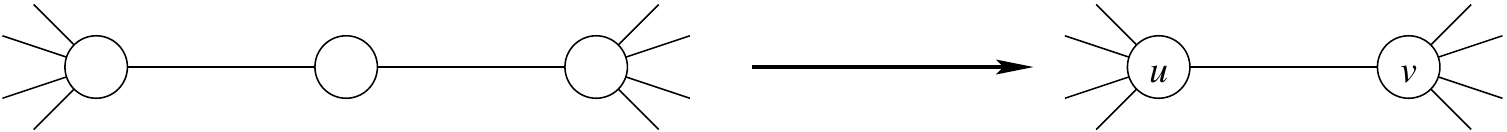}
\caption{Contracting an edge to a degree-two vertex.}\label{fig:uv-edge}
\end{figure}

Note that the winning strategies described in the proof only ever make use
of deletion moves, although the other player is free to contract edges.

\subsection{Bounds on the Nim value}

How large can the Nim value of a graph be?
The number of edges in the graph is an easy upper bound because the Nim
value of a graph can be at most the maximum Nim value of any option
of that graph, plus one.  Since every option of a graph $G$ has strictly
fewer edges than $G$, we can gain no more than one unit of Nim value for
each edge we add.  In fact, this bound is tight only for graphs of zero or
one edges; larger graphs always have Nim value strictly less than the number
of edges, because there is no two-edge graph of Nim value 2 and that
deficiency affects all larger graphs through the induction.

When a graph has some symmetry, there may be several edges which, if deleted
or contracted, give isomorphic results.  The number of options
\emph{distinct up to graph isomorphism and any other operation that does not
change the Nim value} is an upper bound on the Nim value of a position.  As
a result, edge-transitive graphs have a maximum Nim value of 2: a player
could delete any edge (it does not matter which one), or contract any edge,
and in the maximizing case, one of those options gives Nim value 0, one
gives Nim value 1, and the edge-transitive starting graph can have Nim value
2.  More generally, if there are $k$ orbits of edges under the automorphism
group of $G$, then $\mathcal{G}(G)\le 2k$.

But there can be other equivalent moves not captured by the graph
automorphism group.  For instance, deleting any edge in a chain of degree-2
vertices will yield \emph{equivalent} but not necessarily \emph{isomorphic}
graphs regardless of which edge is deleted, because the remaining edges in
the chain all become single-edge blocks, and then only the parity of how
many of them there are is relevant to the Nim value.  Recognizing moves that
are equivalent in this way can tighten the bound a little.

On the other side, the computer results of the next section include
biconnected graphs with Nim values as large as 25.  By the definition of Nim
values, existence of any value implies existence of all smaller values. 
Combining power-of-two values from 1 to 16 with the Nim sum operation allows
the construction of non-biconnected graphs with arbitrary Nim values from 0
to 31.  It seems intuitively reasonable that graphs ought to exist with
arbitrarily large Nim values, but no Nim value greater than 31 has actually
been proven to occur.


\section{Computer experiments}

We implemented the obvious dynamic programming algorithm for computing Nim
values of graphs: namely recursively computing the Nim values of all
options and taking the mex of them, while memoizing computed results in a
hash table indexed by a canonically-labelled representation of the graph. 
Our software has a client-server architecture intended for use on a
multicore machine.  Each client reads graphs from its input and computes
their Nim values as follows:
\begin{itemize}
  \item Detect a few small basis cases (such as graphs with at most
    three edges) and return hardcoded answers for them.
  \item If the graph is not biconnected:  split it into blocks,
    solve those separately, and compute the Nim sum.
  \item When working on a biconnected graph, canonically label it.
  \item Check a local per-client cache (hash table of $2^{25}$ entries,
    roughly 1G of RAM).
  \item If the answer is not in the local hash table: query the
    database server.
  \item If not on the database server:  recursively compute all the Nim
    values of options, and take their mex.
  \item If we did a recursive examination of options:  store the result on
    the database server and in the local hash table, overwriting any
    colliding item in the local hash slot.
\end{itemize}

We used the Tokyo Tyrant key-value store~\cite{FAL:Tokyo} as the central
database server, and wrote client programs in C with
nauty~\cite{McKay:nauty} for canonical labelling.  Although the recursion
rule is different, this general approach of memoized recursion over smaller
graphs is essentially the same as that used by our cycle-counting software
(``ECCHI,'' the Enhanced Cycle Counter and Hamiltonian Integrator) in a
previous project~\cite{Durocher:Cycle}, and we were able to re-use some of
that code.  We used the graph utilities included with nauty to generate sets
of graphs to feed into the computation.

Bearing in mind the difficulty of verifying correctness of final answers for
larger graphs, we spent significant effort on testing the code.  The final
test suite achieves 100\%\ source line coverage of our client software
(excluding third-party material and assertion-failed branches) and covers a
wide range of cases reasonably expected to be relevant to correctness.  For
instance, one test computes the Nim values of all 8-vertex biconnected
graphs (without connecting to the database server), then does it again with
the graphs in a pseudorandomly permuted order, and checks that the results
are the same for all of the graphs.  Since the computation for each graph
depends on the intermediate values stored in the local cache by previous
computations, this test implies finding the answer for each graph by two
different computation trees.  We also ran our tests inside
Valgrind~\cite{Nethercote:Valgrind} to guard
against uninitialized values and other kinds of undefined behaviour.  The
results from our software agree with all our hand calculations (including on
all graphs of up to five vertices) and theoretical results (including
some that were not known when the software was written).

We ran our experiments on one node of a Linux cluster at the IT University
of Copenhagen, with four real Intel CPU cores (eight virtual by
``hyper-threading'') running at 3.60GHz, and 32G of RAM.  We started with
the database on a 250G solid-state drive, switching to a magnetic hard drive
in the final stages when space for the database (including temporary working
space needed by Tokyo Tyrant's ``optimization'' process) ran out on the SSD.

We computed Nim values for the following graphs:
\begin{itemize}
\item Biconnected graphs with 3 to 11 vertices (910914360 graphs
  total).
\item Planar biconnected graphs with 3 to 12 vertices (169178844 graphs
  total).
\item Triangle-free biconnected graphs with 4 to 13 vertices (10757199
  graphs total).
\item Graphs of girth at least five, and biconnected, with 5 to 15 vertices
  (342385 graphs).
\item Cubic triangle-free biconnected graphs with up to 16 vertices (928
  graphs).
\item Complete bipartite graphs $K_{p,q}$ with $p$ and $q$ at most 20 and at
  most 48 edges.
\end{itemize}

All but the largest vertex counts of these experiments ran within about four
days.  There is no single precise number because we repeated the experiments
several times under varying conditions, both to confirm the results and to
test different software configurations.  The largest sizes, which involved
more graphs and slower access to larger files, consumed more like two or
three weeks of computation.

Table~\ref{tab:nim-per-vertex} shows the maximum Nim value known for a
biconnected graph, and the Nim value of the complete graph, for each
value of $n$, the number of vertices.  The case $n=4$ is the only one for
which a non-biconnected graph is known to achieve a greater Nim value (3,
for a triangle plus an edge) than any biconnected graph.  Complete graphs
are interesting for their lack of pattern.  We know the values are
necessarily in $\{0,1,2\}$ because complete graphs are edge-transitive, and
$\mathcal{G}(K_n)\ne\mathcal{G}(K_{n-1})$ because the next smaller complete
graph is always an option; but there is no obvious way to calculate
$\mathcal{G}(K_n)$ faster than recursing over \emph{all} smaller graphs. 

\begin{table}
\begin{tabular}{r|r}
$n$ & $\max \mathcal{G}(G)$ \\ \hline
1 & 0 \\
2 & 1 \\
3 & 2 \\
4 & 1 \\
5 & 4 \\
6 & 6 \\
7 & 8 \\
8 & 13 \\
9 & 18 \\
10 & 22 \\
11 & 25
\end{tabular}
\qquad
\begin{tabular}{r|r}
$n$ & $\mathcal{G}(K_n)$ \\ \hline
1 & 0 \\
2 & 1 \\
3 & 2 \\
4 & 0 \\
5 & 1 \\
6 & 2 \\
7 & 0 \\
8 & 2 \\
9 & 0 \\
10 & 1 \\
11 & 2
\end{tabular}
\caption{Maximum Nim values of biconnected graphs, and Nim values of
complete graphs, by number of vertices}\label{tab:nim-per-vertex}
\end{table}

Searches of the sequences from Table~\ref{tab:nim-per-vertex} and near
variations in the On-Line Encyclopedia of Integer Sequences~\cite{OEIS} turn
up very little.  Some appealing hits are excluded by theoretical
considerations; for instance, the fact that $\max \mathcal{G}(G)$ for any
number of vertices $n$ cannot exceed $\binom{n}{2}$, the maximum number of
edges.  The most exciting search result is that the indices of zeroes in
$\mathcal{G}(K_n)$, namely $1,4,7,9,\ldots$, agree with sequence A007066 for
all known values.  That sequence is described as ``$a(n)=1+\lceil
(n-1)\phi^2\rceil$, $\phi=(1+\sqrt{5})/2$.'' The next few terms are $12, 15,
17, 20, 22, 25,\ldots$ The citations for A007066 include Morrison's
work~\cite{Morrison:Stolarsky} on Wythoff pairs, which come from the
analysis of Wythoff's well-known game~\cite{Wythoff:Modification}.  But
exactly how the Golden Ratio and Wythoff's game would be linked to Graph
Nimors is not clear, and there are so few terms of the sequence known as to
make any connection unreliable.  It would be very interesting, and may
possibly be computationally feasible, to determine $\mathcal{G}(K_{12})$. 
If the link to A007066 is genuine, that ought to be 0.

We collected the complete distribution of Nim values for each combination of
vertex count ($n$) and edge count ($m$); this data is presented in
Appendix~\ref{app:stats}.  In general, the pattern was that for any
combination of $n$ and $m$, there would be just a few very common Nim values
accounting for nearly all the biconnected graphs with those parameters.  The
distribution for $n=10$, $m=23$ shown in Figure~\ref{fig:1023-dist} is a
typical example, with Nim values 1 and 5 accounting for approximately 85\%\
of the graphs.

\begin{figure}
{\small\begin{tabular}{r|r}
$\mathcal{G}$ & \#\ graphs \\ \hline
0 & 23059 \\
1 & 724676 \\
2 & 8889 \\
3 & 418 \\
4 & 7312 \\
5 & 312881 \\
6 & 8679 \\
7 & 23683 \\
8 & 30896 \\
9 & 31990 \\
10 & 21243 \\
11 & 14501 \\
12 & 9004 \\
13 & 4810 \\
14 & 2071 \\
15 & 301 \\
16 & 17
\end{tabular}}
\quad
\raisebox{-1.4in}{\includegraphics[scale=0.87]{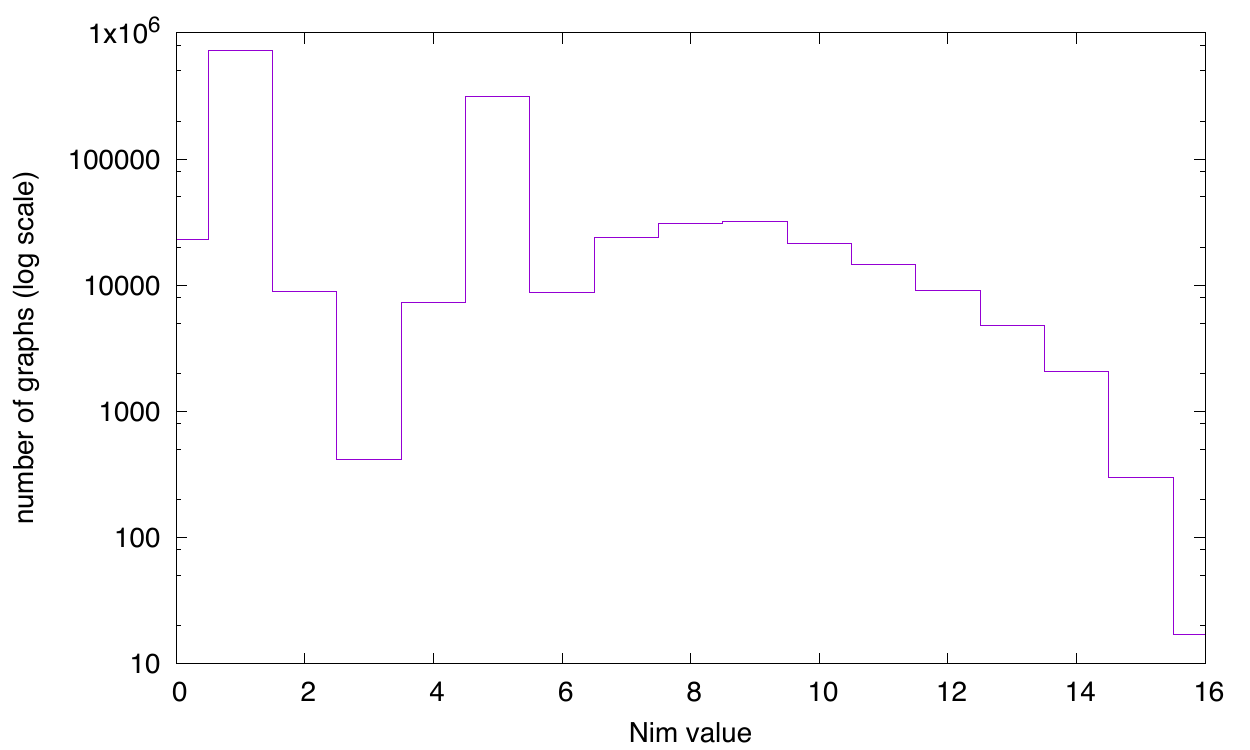}}
\caption{Distribution of Nim values for the 1224430 biconnected graphs of 10
vertices and 23 edges.}\label{fig:1023-dist}
\end{figure}

Values common for a given $m$ are usually very rare for the next larger $m$.
All deletion moves leave the graph with one less edge, and
in a dense graph deletion moves usually leave the graph biconnected and with
the same number of (non-isolated) vertices.  Similarly, all contraction
moves reduce the vertex count by one and the edge count by at least one; in
a sparse graph, a contraction move will usually remove exactly one edge. 
Thus, if a given Nim value is common for (vertex, edge) counts $(n,m-1)$ or
$(n-1,m-1)$, and to a lesser extent, $n-1$ and even smaller $m$, then we
should expect that value to be uncommon for $(n,m)$.  The options for a
graph of a given size will usually include a representative sample of the
graphs one edge smaller.  This interaction between parameter values may
lead to
some of the same kinds of periodic behaviour seen in simpler take-and-break
games on piles of stones~\cite[Chapter 4]{Berlekamp:Winning}, even if only
as a matter of usual-case statistics not guaranteed for all graphs of a
given $n$ and $m$.

Table~\ref{tab:complete-bipartite} shows all the experimentally-calculated
values of $\mathcal{G}(K_{p,q})$; that is, Nim values of complete bipartite
graphs.  The evident patterns in the first two rows are proven
($K_{1,q}$ because the graphs are acyclic, $K_{2,q}$ by
Theorem~\ref{thm:property-s}), but the others remain theoretically open. 
This is another class in which all the graphs are edge-transitive, so the
values are constrained to $\{0,1,2\}$.

\begin{table}
{\setlength{\tabcolsep}{0.5em}
\begin{tabular}{cc|cccccccccccccccccccc}
& & $q$ \\
& & 1 & 2 & 3 & 4 & 5 & 6 & 7 & 8 & 9 & 10 & 11 & 12 & 13 & 14 & 15 & 16 & 16 & 18 & 19 & 20 \\ \hline
$p$ & 1 & 1 & 0 & 1 & 0 & 1 & 0 & 1 & 0 & 1 & 0 & 1 & 0 & 1 & 0 & 1 & 0 & 1 & 0 & 1 & 0 \\
& 2 & & 0 & 0 & 0 & 0 & 0 & 0 & 0 & 0 & 0 & 0 & 0 & 0 & 0 & 0 & 0 & 0 & 0 & 0 & 0 \\
& 3 & & & 1 & 0 & 1 & 0 & 1 & 0 & 1 & 0 & 1 & 0 & 1 & 0 & 1 & 0 \\
& 4 & & & & 2 & 0 & 2 & 0 & 2 & 0 & 2 & 0 & 2 \\
& 5 & & & & & 0 & 0 & 0 & 0 & 0 \\
& 6 & & & & & & 1 & 0 & 1
\end{tabular}}
\caption{Nim values of complete bipartite graphs.}
\label{tab:complete-bipartite}
\end{table}


\section{The Parity Heuristic}\label{sec:ph}

A deletion move always subtracts one from the total number of edges in the
graph.  In a sparse graph, a randomly chosen contraction move will probably
be on an edge not in any triangle, so it will also subtract exactly one from
the total number of edges.  In a dense graph, a contraction move may remove
many edges, but it still seems that a random contraction would as likely as
not be on an edge that is part of an even number of triangles, so that it
will reduce the edge count by an odd number.  Thus, if we knew nothing about
strategy, we might expect that at least within some kind of approximation,
players would remove an odd number of edges on every move and we could
evaluate whether a position favours the next or previous player simply by
looking at the parity of the number of edges remaining.  The following
definition is a stronger form of that intuitive expectation.

\begin{definition}
The \emph{Parity Heuristic} (PH) is the proposition that for a graph $G$
with $m$ edges, $\mathcal{G}(G)$ is 0 if $m$ is even and 1 if $m$ is odd.
\end{definition}

Since graphs of Nim value other than $0$ and $1$ exist, PH fails as a
complete analysis of the game.  However, the computer results, and experience
with human play, suggest that PH holds for very many graphs.

The Parity Heuristic is proven to hold in these cases:
\begin{itemize}
\item acyclic graphs (all moves leave the graph acyclic and with one less
edge, induction down to edgeless graphs);
\item cycles except $C_3$ (as described in Subsection~\ref{sub:easy-cases});
\item fused cycle pairs, if neither is a triangle
(Theorem~\ref{thm:fused-cycle});
\item $K_{2,q}$ for any $q$ (these graphs have property S and even edge
counts, see Theorem~\ref{thm:property-s}); and
\item graphs of more than one block, if it holds for each of the blocks
(by the Sprague-Grundy Theorem).
\end{itemize}

For graphs with property S, we
have Theorem~\ref{thm:property-s} that $\mathcal{G}(G)=0$ if and only if $m$
is even.  That is equivalent to PH when the number of edges is even, but a
little weaker when it is odd.

We conjecture that PH holds for:
\begin{itemize}
\item graphs with property S and an odd number of edges (not all existing
computer results have been searched for this, but it is a reasonable
extension of the theoretical results);
\item $K_{3,q}$ for any $q$ (no counterexamples up to $K_{3,16}$);
\item graphs of girth at least 5 (no counterexamples up to $n=15$); and
\item cubic triangle-free graphs (no counterexamples up to $n=16$).
\end{itemize}

It is known not to hold in general for:
\begin{itemize}
\item all graphs (smallest counterexample $C_3$, Nim value 2);
\item cubic graphs (smallest counterexample the triangular
prism graph, with $n=6$, $m=9$, Nim value 0);
\item triangle-free graphs (smallest counterexample $K_{4,4}$,
Nim value 2); nor
\item complete graphs ($C_3$ is a counterexample, but there are several
others known also).
\end{itemize}

The Parity Heuristic is not proven to always fail for any interesting
infinite classes of graphs.  However, for all known cases of $K_{p,q}$ with
$p$ and $q$ both at least 4, the Nim value is nonzero if and only if $p$ and
$q$ are both even, which contradicts PH whenever $p+q$ is even.


\section{Further thoughts}

We have described the game of Graph Nimors and some theoretical and
experimental results on strategy for it.  Many natural questions
remain open.

All the conjectures regarding the Parity Heuristic in
Section~\ref{sec:ph} seem good targets for theoretical work.  We are
especially interested in the girth-5 case, which seems like it should be
easy to prove.  Proving Nim values for well-behaved infinite classes of
graphs, such as $K_{p,q}$ with fixed constant $p$ such as 3 or 4, also seems
like a bite-sized problem.  Any result on $\mathcal{G}(K_n)$ (that
is, the Nim value of the arbitrary-sized complete graph) would be
interesting, but may be difficult; in particular, the coincidence with OEIS
sequence A007066~\cite{OEIS}, which is related to the Golden Ratio and
Wythoff's Nim-like game, would be interesting to confirm or disprove.  Just
computing $\mathcal{G}(K_{12})$, currently known to be either 0 or
1, could either lend additional support to that connection (if the answer is
0) or immediately disprove it (if the answer is 1); and that seems to be a
large computational task, but within the range of possibility, given some
improvements to software and hardware.

The experimental side of this work revealed some deficiencies in Tokyo
Tyrant's ability to handle databases on magnetic disk as opposed to SSD, and
other high-performance key-value stores suitable for external-memory
databases are surprisingly few.  Popular ``noSQL databases'' are frequently
designed for smaller numbers of larger records, or to operate only in main
memory.  Building a key-value store capable of handling a random access
pattern on magnetic disk with many billions of very small records
(presumably, batching requests from many parallel threads to make the best
of each disk seek operation) is an interesting software engineering problem.

It is reasonable to guess that calculating the Nim value of a graph with
respect to Graph Nimors should be PSPACE-complete, but that remains
unproven.  Constraining the moves, for instance by fixing a sequence of the
edges and requiring players to follow that sequence, might create a variant
for which hardness is easier to prove.  Much of the theoretical difficulty
comes from the fact that there is currently no known way to split a graph
into smaller parts with predictable relations between the Nim values of the
parts, except to split it into blocks, at which point the blocks' values
affect each other only through the Nim sum operation.  Having any other way
to localize the effects of changes in the graph would help support
construction of gadgets for a hardness proof.  Constructions for arbitrarily
large biconnected graphs with specified Nim values; arbitrarily large Nim
values; or a proof that arbitrarily large Nim values are not possible; might
contribute usefully to the hardness question as well as being interesting in
themselves.

Many variations of Graph Nimors are possible.  The \emph{mis\`{e}re}
variation is obvious, and could be expected to yield as much complicated
theory as any other impartial \emph{mis\`{e}re} game.  One could make Graph
Nimors partisan by requiring one player to always delete and one to always
contract.  In a graph of large girth with few cycles, the deleting player
may be able to break all the cycles before the contracting player can form
any triangles, thus forcing the game to be determined by parity of number of
edges; but if that is not in the deleting player's interest, or if the girth
is small or number of cycles large, the result is not clear.  When we first
invented this game, we were concerned that it might turn out to be too easy
under the basic rules presented here, and considered adding constraints like
``no move is allowed that would leave the graph planar.'' Although
apparently unnecessary to create a difficult game, such a constraint might
be interesting as a way to link nimors and minors.


\bibliography{nimors}


\appendix

\section{Distributions of Nim values for biconnected graphs}\label{app:stats}

This appendix gives the counts of Nim values observed for all biconnected
graphs with between 3 and 11 vertices, sorted with the most common values at
the top.

\subsection{3 vertices}

\begin{tabular}{r|l}
\multicolumn{2}{c}{$n=3$\quad$m=3$}\\
 $\mathcal{G}$ & count \\ \hline
  2 & 1 
\end{tabular}

\subsection{4 vertices}

\begin{tabular}{r|l}
\multicolumn{2}{c}{$n=4$\quad$m=4$}\\
 $\mathcal{G}$ & count \\ \hline
  0 & 1 
\end{tabular}
\begin{tabular}{r|l}
\multicolumn{2}{c}{$n=4$\quad$m=5$}\\
 $\mathcal{G}$ & count \\ \hline
  1 & 1 
\end{tabular}
\begin{tabular}{r|l}
\multicolumn{2}{c}{$n=4$\quad$m=6$}\\
 $\mathcal{G}$ & count \\ \hline
  0 & 1 
\end{tabular}

\subsection{5 vertices}

\begin{tabular}{r|l}
\multicolumn{2}{c}{$n=5$\quad$m=5$}\\
 $\mathcal{G}$ & count \\ \hline
  1 & 1 
\end{tabular}
\begin{tabular}{r|l}
\multicolumn{2}{c}{$n=5$\quad$m=6$}\\
 $\mathcal{G}$ & count \\ \hline
  4 & 1 \\
  0 & 1 
\end{tabular}
\begin{tabular}{r|l}
\multicolumn{2}{c}{$n=5$\quad$m=7$}\\
 $\mathcal{G}$ & count \\ \hline
  2 & 3 
\end{tabular}
\begin{tabular}{r|l}
\multicolumn{2}{c}{$n=5$\quad$m=8$}\\
 $\mathcal{G}$ & count \\ \hline
  4 & 1 \\
  3 & 1 
\end{tabular}
\begin{tabular}{r|l}
\multicolumn{2}{c}{$n=5$\quad$m=9$}\\
 $\mathcal{G}$ & count \\ \hline
  2 & 1 
\end{tabular}
\begin{tabular}{r|l}
\multicolumn{2}{c}{$n=5$\quad$m=10$}\\
 $\mathcal{G}$ & count \\ \hline
  1 & 1 
\end{tabular}

\subsection{6 vertices}

\begin{tabular}{r|l}
\multicolumn{2}{c}{$n=6$\quad$m=6$}\\
 $\mathcal{G}$ & count \\ \hline
  0 & 1 
\end{tabular}
\begin{tabular}{r|l}
\multicolumn{2}{c}{$n=6$\quad$m=7$}\\
 $\mathcal{G}$ & count \\ \hline
  1 & 2 \\
  2 & 1 
\end{tabular}
\begin{tabular}{r|l}
\multicolumn{2}{c}{$n=6$\quad$m=8$}\\
 $\mathcal{G}$ & count \\ \hline
  3 & 4 \\
  0 & 4 \\
  1 & 1 
\end{tabular}
\begin{tabular}{r|l}
\multicolumn{2}{c}{$n=6$\quad$m=9$}\\
 $\mathcal{G}$ & count \\ \hline
  1 & 10 \\
  5 & 2 \\
  4 & 1 \\
  0 & 1 
\end{tabular}
\begin{tabular}{r|l}
\multicolumn{2}{c}{$n=6$\quad$m=10$}\\
 $\mathcal{G}$ & count \\ \hline
  0 & 6 \\
  3 & 2 \\
  6 & 1 \\
  5 & 1 \\
  4 & 1 \\
  1 & 1 
\end{tabular}
\begin{tabular}{r|l}
\multicolumn{2}{c}{$n=6$\quad$m=11$}\\
 $\mathcal{G}$ & count \\ \hline
  1 & 5 \\
  5 & 2 \\
  3 & 1 
\end{tabular}
\begin{tabular}{r|l}
\multicolumn{2}{c}{$n=6$\quad$m=12$}\\
 $\mathcal{G}$ & count \\ \hline
  0 & 4 \\
  2 & 1 
\end{tabular}
\begin{tabular}{r|l}
\multicolumn{2}{c}{$n=6$\quad$m=13$}\\
 $\mathcal{G}$ & count \\ \hline
  4 & 1 \\
  3 & 1 
\end{tabular}
\begin{tabular}{r|l}
\multicolumn{2}{c}{$n=6$\quad$m=14$}\\
 $\mathcal{G}$ & count \\ \hline
  0 & 1 
\end{tabular}
\begin{tabular}{r|l}
\multicolumn{2}{c}{$n=6$\quad$m=15$}\\
 $\mathcal{G}$ & count \\ \hline
  2 & 1 
\end{tabular}

\subsection{7 vertices}

\begin{tabular}{r|l}
\multicolumn{2}{c}{$n=7$\quad$m=7$}\\
 $\mathcal{G}$ & count \\ \hline
  1 & 1 
\end{tabular}
\begin{tabular}{r|l}
\multicolumn{2}{c}{$n=7$\quad$m=8$}\\
 $\mathcal{G}$ & count \\ \hline
  0 & 3 \\
  3 & 1 
\end{tabular}
\begin{tabular}{r|l}
\multicolumn{2}{c}{$n=7$\quad$m=9$}\\
 $\mathcal{G}$ & count \\ \hline
  1 & 10 \\
  2 & 7 \\
  5 & 2 \\
  4 & 1 
\end{tabular}
\begin{tabular}{r|l}
\multicolumn{2}{c}{$n=7$\quad$m=10$}\\
 $\mathcal{G}$ & count \\ \hline
  0 & 25 \\
  4 & 16 \\
  6 & 3 \\
  7 & 2 \\
  1 & 2 \\
  5 & 1 \\
  3 & 1 
\end{tabular}
\begin{tabular}{r|l}
\multicolumn{2}{c}{$n=7$\quad$m=11$}\\
 $\mathcal{G}$ & count \\ \hline
  2 & 66 \\
  1 & 6 \\
  8 & 3 \\
  5 & 3 \\
  7 & 2 \\
  3 & 2 
\end{tabular}
\begin{tabular}{r|l}
\multicolumn{2}{c}{$n=7$\quad$m=12$}\\
 $\mathcal{G}$ & count \\ \hline
  4 & 38 \\
  0 & 20 \\
  3 & 16 \\
  7 & 6 \\
  8 & 5 \\
  6 & 5 \\
  1 & 3 \\
  5 & 1 
\end{tabular}
\begin{tabular}{r|l}
\multicolumn{2}{c}{$n=7$\quad$m=13$}\\
 $\mathcal{G}$ & count \\ \hline
  2 & 57 \\
  1 & 6 \\
  7 & 5 \\
  5 & 5 \\
  9 & 4 \\
  8 & 2 \\
  6 & 1 \\
  4 & 1 
\end{tabular}
\begin{tabular}{r|l}
\multicolumn{2}{c}{$n=7$\quad$m=14$}\\
 $\mathcal{G}$ & count \\ \hline
  4 & 27 \\
  3 & 15 \\
  0 & 6 \\
  6 & 5 \\
  1 & 4 \\
  5 & 2 
\end{tabular}
\begin{tabular}{r|l}
\multicolumn{2}{c}{$n=7$\quad$m=15$}\\
 $\mathcal{G}$ & count \\ \hline
  2 & 18 \\
  1 & 8 \\
  6 & 3 \\
  0 & 3 \\
  7 & 2 \\
  5 & 2 \\
  4 & 1 \\
  3 & 1 
\end{tabular}
\begin{tabular}{r|l}
\multicolumn{2}{c}{$n=7$\quad$m=16$}\\
 $\mathcal{G}$ & count \\ \hline
  4 & 5 \\
  5 & 4 \\
  6 & 3 \\
  3 & 3 \\
  1 & 2 \\
  0 & 2 \\
  8 & 1 
\end{tabular}
\begin{tabular}{r|l}
\multicolumn{2}{c}{$n=7$\quad$m=17$}\\
 $\mathcal{G}$ & count \\ \hline
  1 & 5 \\
  2 & 4 \\
  0 & 1 
\end{tabular}
\begin{tabular}{r|l}
\multicolumn{2}{c}{$n=7$\quad$m=18$}\\
 $\mathcal{G}$ & count \\ \hline
  5 & 1 \\
  4 & 1 \\
  3 & 1 \\
  1 & 1 \\
  0 & 1 
\end{tabular}
\begin{tabular}{r|l}
\multicolumn{2}{c}{$n=7$\quad$m=19$}\\
 $\mathcal{G}$ & count \\ \hline
  6 & 1 \\
  1 & 1 
\end{tabular}
\begin{tabular}{r|l}
\multicolumn{2}{c}{$n=7$\quad$m=20$}\\
 $\mathcal{G}$ & count \\ \hline
  3 & 1 
\end{tabular}
\begin{tabular}{r|l}
\multicolumn{2}{c}{$n=7$\quad$m=21$}\\
 $\mathcal{G}$ & count \\ \hline
  0 & 1 
\end{tabular}

\subsection{8 vertices}

\begin{tabular}{r|l}
\multicolumn{2}{c}{$n=8$\quad$m=8$}\\
 $\mathcal{G}$ & count \\ \hline
  0 & 1 
\end{tabular}
\begin{tabular}{r|l}
\multicolumn{2}{c}{$n=8$\quad$m=9$}\\
 $\mathcal{G}$ & count \\ \hline
  1 & 5 \\
  2 & 1 
\end{tabular}
\begin{tabular}{r|l}
\multicolumn{2}{c}{$n=8$\quad$m=10$}\\
 $\mathcal{G}$ & count \\ \hline
  0 & 25 \\
  3 & 9 \\
  4 & 6 
\end{tabular}
\begin{tabular}{r|l}
\multicolumn{2}{c}{$n=8$\quad$m=11$}\\
 $\mathcal{G}$ & count \\ \hline
  1 & 69 \\
  2 & 34 \\
  5 & 26 \\
  6 & 16 \\
  8 & 6 \\
  4 & 5 \\
  0 & 4 \\
  3 & 1 
\end{tabular}
\begin{tabular}{r|l}
\multicolumn{2}{c}{$n=8$\quad$m=12$}\\
 $\mathcal{G}$ & count \\ \hline
  3 & 301 \\
  0 & 75 \\
  4 & 15 \\
  6 & 12 \\
  7 & 6 \\
  5 & 6 \\
  1 & 6 \\
  9 & 3 \\
  8 & 3 \\
  2 & 2 
\end{tabular}
\begin{tabular}{r|l}
\multicolumn{2}{c}{$n=8$\quad$m=13$}\\
 $\mathcal{G}$ & count \\ \hline
  1 & 468 \\
  5 & 139 \\
  0 & 59 \\
  6 & 31 \\
  4 & 28 \\
  2 & 26 \\
  7 & 17 \\
  8 & 8 \\
  10 & 3 \\
  9 & 1 
\end{tabular}
\begin{tabular}{r|l}
\multicolumn{2}{c}{$n=8$\quad$m=14$}\\
 $\mathcal{G}$ & count \\ \hline
  3 & 576 \\
  0 & 209 \\
  6 & 105 \\
  8 & 43 \\
  7 & 40 \\
  4 & 29 \\
  9 & 23 \\
  5 & 23 \\
  2 & 9 \\
  10 & 9 \\
  1 & 9 \\
  11 & 1 
\end{tabular}
\begin{tabular}{r|l}
\multicolumn{2}{c}{$n=8$\quad$m=15$}\\
 $\mathcal{G}$ & count \\ \hline
  1 & 694 \\
  5 & 240 \\
  0 & 99 \\
  6 & 31 \\
  4 & 29 \\
  2 & 29 \\
  7 & 27 \\
  9 & 20 \\
  8 & 13 \\
  10 & 8 \\
  11 & 6 \\
  3 & 1 
\end{tabular}
\begin{tabular}{r|l}
\multicolumn{2}{c}{$n=8$\quad$m=16$}\\
 $\mathcal{G}$ & count \\ \hline
  3 & 383 \\
  0 & 270 \\
  6 & 174 \\
  8 & 88 \\
  7 & 69 \\
  10 & 28 \\
  2 & 27 \\
  5 & 25 \\
  9 & 23 \\
  4 & 16 \\
  1 & 6 \\
  11 & 4 \\
  12 & 1 
\end{tabular}
\begin{tabular}{r|l}
\multicolumn{2}{c}{$n=8$\quad$m=17$}\\
 $\mathcal{G}$ & count \\ \hline
  1 & 390 \\
  5 & 231 \\
  9 & 49 \\
  0 & 39 \\
  7 & 33 \\
  2 & 31 \\
  4 & 27 \\
  8 & 26 \\
  6 & 20 \\
  10 & 18 \\
  11 & 14 \\
  3 & 7 
\end{tabular}
\begin{tabular}{r|l}
\multicolumn{2}{c}{$n=8$\quad$m=18$}\\
 $\mathcal{G}$ & count \\ \hline
  0 & 211 \\
  3 & 165 \\
  8 & 60 \\
  6 & 57 \\
  7 & 45 \\
  2 & 27 \\
  10 & 21 \\
  4 & 11 \\
  9 & 9 \\
  11 & 6 \\
  5 & 5 \\
  1 & 3 \\
  12 & 2 
\end{tabular}
\begin{tabular}{r|l}
\multicolumn{2}{c}{$n=8$\quad$m=19$}\\
 $\mathcal{G}$ & count \\ \hline
  5 & 92 \\
  1 & 81 \\
  9 & 54 \\
  2 & 34 \\
  7 & 27 \\
  4 & 22 \\
  6 & 17 \\
  3 & 17 \\
  11 & 13 \\
  8 & 12 \\
  10 & 8 \\
  12 & 4 \\
  0 & 3 \\
  13 & 2 
\end{tabular}
\begin{tabular}{r|l}
\multicolumn{2}{c}{$n=8$\quad$m=20$}\\
 $\mathcal{G}$ & count \\ \hline
  0 & 103 \\
  3 & 44 \\
  7 & 19 \\
  8 & 13 \\
  6 & 13 \\
  2 & 10 \\
  10 & 4 \\
  4 & 3 \\
  1 & 3 \\
  9 & 1 \\
  12 & 1 \\
  11 & 1 
\end{tabular}
\begin{tabular}{r|l}
\multicolumn{2}{c}{$n=8$\quad$m=21$}\\
 $\mathcal{G}$ & count \\ \hline
  5 & 24 \\
  2 & 17 \\
  4 & 16 \\
  9 & 11 \\
  6 & 11 \\
  1 & 10 \\
  7 & 8 \\
  8 & 7 \\
  3 & 6 \\
  0 & 2 
\end{tabular}
\begin{tabular}{r|l}
\multicolumn{2}{c}{$n=8$\quad$m=22$}\\
 $\mathcal{G}$ & count \\ \hline
  0 & 25 \\
  3 & 12 \\
  2 & 6 \\
  8 & 3 \\
  7 & 2 \\
  6 & 2 \\
  1 & 2 \\
  5 & 1 \\
  4 & 1 \\
  10 & 1 
\end{tabular}
\begin{tabular}{r|l}
\multicolumn{2}{c}{$n=8$\quad$m=23$}\\
 $\mathcal{G}$ & count \\ \hline
  2 & 8 \\
  4 & 6 \\
  5 & 4 \\
  7 & 3 \\
  1 & 2 \\
  8 & 1 
\end{tabular}
\begin{tabular}{r|l}
\multicolumn{2}{c}{$n=8$\quad$m=24$}\\
 $\mathcal{G}$ & count \\ \hline
  0 & 6 \\
  7 & 2 \\
  9 & 1 \\
  3 & 1 \\
  1 & 1 
\end{tabular}
\begin{tabular}{r|l}
\multicolumn{2}{c}{$n=8$\quad$m=25$}\\
 $\mathcal{G}$ & count \\ \hline
  2 & 4 \\
  1 & 1 
\end{tabular}
\begin{tabular}{r|l}
\multicolumn{2}{c}{$n=8$\quad$m=26$}\\
 $\mathcal{G}$ & count \\ \hline
  4 & 2 
\end{tabular}
\begin{tabular}{r|l}
\multicolumn{2}{c}{$n=8$\quad$m=27$}\\
 $\mathcal{G}$ & count \\ \hline
  1 & 1 
\end{tabular}
\begin{tabular}{r|l}
\multicolumn{2}{c}{$n=8$\quad$m=28$}\\
 $\mathcal{G}$ & count \\ \hline
  2 & 1 
\end{tabular}

\subsection{9 vertices}

\begin{tabular}{r|l}
\multicolumn{2}{c}{$n=9$\quad$m=9$}\\
 $\mathcal{G}$ & count \\ \hline
  1 & 1 
\end{tabular}
\begin{tabular}{r|l}
\multicolumn{2}{c}{$n=9$\quad$m=10$}\\
 $\mathcal{G}$ & count \\ \hline
  0 & 6 \\
  3 & 1 
\end{tabular}
\begin{tabular}{r|l}
\multicolumn{2}{c}{$n=9$\quad$m=11$}\\
 $\mathcal{G}$ & count \\ \hline
  1 & 44 \\
  2 & 20 \\
  5 & 6 
\end{tabular}
\begin{tabular}{r|l}
\multicolumn{2}{c}{$n=9$\quad$m=12$}\\
 $\mathcal{G}$ & count \\ \hline
  0 & 178 \\
  3 & 161 \\
  4 & 59 \\
  7 & 12 \\
  2 & 8 \\
  6 & 5 \\
  1 & 5 \\
  5 & 4 \\
  8 & 1 
\end{tabular}
\begin{tabular}{r|l}
\multicolumn{2}{c}{$n=9$\quad$m=13$}\\
 $\mathcal{G}$ & count \\ \hline
  2 & 859 \\
  1 & 482 \\
  5 & 162 \\
  6 & 68 \\
  4 & 44 \\
  8 & 41 \\
  7 & 28 \\
  0 & 21 \\
  10 & 11 \\
  9 & 10 \\
  3 & 3 
\end{tabular}
\begin{tabular}{r|l}
\multicolumn{2}{c}{$n=9$\quad$m=14$}\\
 $\mathcal{G}$ & count \\ \hline
  0 & 2318 \\
  4 & 1428 \\
  7 & 237 \\
  3 & 224 \\
  6 & 206 \\
  5 & 159 \\
  8 & 73 \\
  1 & 56 \\
  9 & 50 \\
  2 & 24 \\
  10 & 19 \\
  11 & 2 
\end{tabular}
\begin{tabular}{r|l}
\multicolumn{2}{c}{$n=9$\quad$m=15$}\\
 $\mathcal{G}$ & count \\ \hline
  2 & 7487 \\
  1 & 622 \\
  5 & 520 \\
  6 & 375 \\
  7 & 324 \\
  8 & 297 \\
  9 & 121 \\
  4 & 68 \\
  3 & 57 \\
  10 & 57 \\
  0 & 40 \\
  11 & 13 
\end{tabular}
\begin{tabular}{r|l}
\multicolumn{2}{c}{$n=9$\quad$m=16$}\\
 $\mathcal{G}$ & count \\ \hline
  4 & 7324 \\
  0 & 5733 \\
  3 & 733 \\
  7 & 607 \\
  6 & 561 \\
  5 & 426 \\
  8 & 398 \\
  9 & 270 \\
  1 & 258 \\
  10 & 136 \\
  11 & 71 \\
  2 & 17 \\
  12 & 8 
\end{tabular}
\begin{tabular}{r|l}
\multicolumn{2}{c}{$n=9$\quad$m=17$}\\
 $\mathcal{G}$ & count \\ \hline
  2 & 18029 \\
  7 & 853 \\
  6 & 833 \\
  5 & 804 \\
  1 & 773 \\
  8 & 572 \\
  9 & 362 \\
  10 & 212 \\
  3 & 158 \\
  11 & 92 \\
  0 & 71 \\
  4 & 59 \\
  12 & 24 \\
  13 & 2 
\end{tabular}
\begin{tabular}{r|l}
\multicolumn{2}{c}{$n=9$\quad$m=18$}\\
 $\mathcal{G}$ & count \\ \hline
  4 & 15188 \\
  0 & 6322 \\
  3 & 2038 \\
  7 & 564 \\
  6 & 522 \\
  8 & 510 \\
  5 & 462 \\
  1 & 428 \\
  9 & 425 \\
  10 & 313 \\
  11 & 145 \\
  12 & 63 \\
  2 & 25 \\
  13 & 10 
\end{tabular}
\begin{tabular}{r|l}
\multicolumn{2}{c}{$n=9$\quad$m=19$}\\
 $\mathcal{G}$ & count \\ \hline
  2 & 20654 \\
  7 & 1324 \\
  6 & 1304 \\
  1 & 1167 \\
  5 & 790 \\
  9 & 719 \\
  8 & 647 \\
  10 & 429 \\
  11 & 228 \\
  0 & 193 \\
  3 & 186 \\
  12 & 120 \\
  4 & 50 \\
  13 & 25 \\
  14 & 1 
\end{tabular}
\begin{tabular}{r|l}
\multicolumn{2}{c}{$n=9$\quad$m=20$}\\
 $\mathcal{G}$ & count \\ \hline
  4 & 14787 \\
  0 & 3927 \\
  3 & 2628 \\
  1 & 556 \\
  5 & 547 \\
  8 & 507 \\
  10 & 484 \\
  7 & 467 \\
  6 & 446 \\
  9 & 412 \\
  11 & 332 \\
  12 & 178 \\
  13 & 58 \\
  2 & 14 \\
  14 & 7 
\end{tabular}
\begin{tabular}{r|l}
\multicolumn{2}{c}{$n=9$\quad$m=21$}\\
 $\mathcal{G}$ & count \\ \hline
  2 & 12952 \\
  6 & 1600 \\
  1 & 1482 \\
  7 & 943 \\
  8 & 621 \\
  9 & 609 \\
  5 & 590 \\
  10 & 518 \\
  11 & 428 \\
  12 & 321 \\
  0 & 202 \\
  13 & 140 \\
  3 & 95 \\
  4 & 35 \\
  14 & 34 
\end{tabular}
\begin{tabular}{r|l}
\multicolumn{2}{c}{$n=9$\quad$m=22$}\\
 $\mathcal{G}$ & count \\ \hline
  4 & 8507 \\
  3 & 1574 \\
  0 & 1297 \\
  5 & 541 \\
  1 & 528 \\
  10 & 410 \\
  8 & 408 \\
  7 & 328 \\
  11 & 320 \\
  6 & 306 \\
  12 & 256 \\
  9 & 253 \\
  13 & 141 \\
  14 & 71 \\
  2 & 17 \\
  15 & 14 
\end{tabular}
\begin{tabular}{r|l}
\multicolumn{2}{c}{$n=9$\quad$m=23$}\\
 $\mathcal{G}$ & count \\ \hline
  2 & 4802 \\
  1 & 1285 \\
  6 & 1068 \\
  7 & 498 \\
  10 & 372 \\
  8 & 341 \\
  5 & 301 \\
  9 & 286 \\
  11 & 261 \\
  12 & 216 \\
  0 & 136 \\
  13 & 132 \\
  3 & 61 \\
  14 & 56 \\
  15 & 17 \\
  4 & 6 \\
  16 & 4 
\end{tabular}
\begin{tabular}{r|l}
\multicolumn{2}{c}{$n=9$\quad$m=24$}\\
 $\mathcal{G}$ & count \\ \hline
  4 & 2600 \\
  3 & 698 \\
  0 & 363 \\
  5 & 324 \\
  1 & 277 \\
  11 & 257 \\
  10 & 253 \\
  8 & 242 \\
  9 & 183 \\
  7 & 173 \\
  12 & 168 \\
  13 & 127 \\
  14 & 92 \\
  6 & 84 \\
  15 & 33 \\
  2 & 9 \\
  16 & 2 
\end{tabular}
\begin{tabular}{r|l}
\multicolumn{2}{c}{$n=9$\quad$m=25$}\\
 $\mathcal{G}$ & count \\ \hline
  2 & 907 \\
  1 & 741 \\
  6 & 430 \\
  10 & 185 \\
  7 & 160 \\
  12 & 121 \\
  9 & 110 \\
  11 & 106 \\
  5 & 103 \\
  8 & 102 \\
  13 & 77 \\
  0 & 54 \\
  14 & 44 \\
  15 & 31 \\
  3 & 20 \\
  16 & 17 \\
  4 & 2 
\end{tabular}
\begin{tabular}{r|l}
\multicolumn{2}{c}{$n=9$\quad$m=26$}\\
 $\mathcal{G}$ & count \\ \hline
  4 & 481 \\
  3 & 242 \\
  0 & 145 \\
  5 & 136 \\
  11 & 99 \\
  10 & 83 \\
  9 & 81 \\
  8 & 78 \\
  1 & 68 \\
  7 & 59 \\
  13 & 31 \\
  12 & 29 \\
  14 & 28 \\
  6 & 22 \\
  15 & 15 \\
  16 & 12 \\
  2 & 9 \\
  17 & 3 
\end{tabular}
\begin{tabular}{r|l}
\multicolumn{2}{c}{$n=9$\quad$m=27$}\\
 $\mathcal{G}$ & count \\ \hline
  1 & 281 \\
  6 & 156 \\
  2 & 81 \\
  10 & 33 \\
  0 & 33 \\
  12 & 29 \\
  9 & 25 \\
  8 & 22 \\
  5 & 22 \\
  7 & 21 \\
  11 & 20 \\
  13 & 15 \\
  3 & 10 \\
  14 & 9 \\
  15 & 5 \\
  4 & 2 \\
  16 & 1 
\end{tabular}
\begin{tabular}{r|l}
\multicolumn{2}{c}{$n=9$\quad$m=28$}\\
 $\mathcal{G}$ & count \\ \hline
  3 & 90 \\
  4 & 81 \\
  5 & 44 \\
  0 & 41 \\
  8 & 19 \\
  11 & 14 \\
  7 & 10 \\
  10 & 9 \\
  1 & 9 \\
  9 & 7 \\
  2 & 4 \\
  14 & 4 \\
  12 & 4 \\
  15 & 2 \\
  13 & 2 \\
  6 & 1 \\
  16 & 1 
\end{tabular}
\begin{tabular}{r|l}
\multicolumn{2}{c}{$n=9$\quad$m=29$}\\
 $\mathcal{G}$ & count \\ \hline
  1 & 62 \\
  6 & 50 \\
  5 & 9 \\
  2 & 7 \\
  0 & 5 \\
  4 & 4 \\
  3 & 3 \\
  10 & 3 \\
  9 & 1 \\
  8 & 1 \\
  13 & 1 \\
  11 & 1 
\end{tabular}
\begin{tabular}{r|l}
\multicolumn{2}{c}{$n=9$\quad$m=30$}\\
 $\mathcal{G}$ & count \\ \hline
  3 & 27 \\
  0 & 16 \\
  7 & 6 \\
  5 & 5 \\
  2 & 4 \\
  4 & 2 \\
  9 & 1 \\
  8 & 1 \\
  10 & 1 
\end{tabular}
\begin{tabular}{r|l}
\multicolumn{2}{c}{$n=9$\quad$m=31$}\\
 $\mathcal{G}$ & count \\ \hline
  6 & 10 \\
  1 & 8 \\
  5 & 3 \\
  0 & 3 \\
  4 & 1 
\end{tabular}
\begin{tabular}{r|l}
\multicolumn{2}{c}{$n=9$\quad$m=32$}\\
 $\mathcal{G}$ & count \\ \hline
  3 & 5 \\
  0 & 5 \\
  2 & 1 
\end{tabular}
\begin{tabular}{r|l}
\multicolumn{2}{c}{$n=9$\quad$m=33$}\\
 $\mathcal{G}$ & count \\ \hline
  5 & 4 \\
  0 & 1 
\end{tabular}
\begin{tabular}{r|l}
\multicolumn{2}{c}{$n=9$\quad$m=34$}\\
 $\mathcal{G}$ & count \\ \hline
  3 & 1 \\
  0 & 1 
\end{tabular}
\begin{tabular}{r|l}
\multicolumn{2}{c}{$n=9$\quad$m=35$}\\
 $\mathcal{G}$ & count \\ \hline
  4 & 1 
\end{tabular}
\begin{tabular}{r|l}
\multicolumn{2}{c}{$n=9$\quad$m=36$}\\
 $\mathcal{G}$ & count \\ \hline
  0 & 1 
\end{tabular}

\subsection{10 vertices}

\begin{tabular}{r|l}
\multicolumn{2}{c}{$n=10$\quad$m=10$}\\
 $\mathcal{G}$ & count \\ \hline
  0 & 1 
\end{tabular}
\begin{tabular}{r|l}
\multicolumn{2}{c}{$n=10$\quad$m=11$}\\
 $\mathcal{G}$ & count \\ \hline
  1 & 8 \\
  2 & 1 
\end{tabular}
\begin{tabular}{r|l}
\multicolumn{2}{c}{$n=10$\quad$m=12$}\\
 $\mathcal{G}$ & count \\ \hline
  0 & 84 \\
  3 & 31 \\
  4 & 6 
\end{tabular}
\begin{tabular}{r|l}
\multicolumn{2}{c}{$n=10$\quad$m=13$}\\
 $\mathcal{G}$ & count \\ \hline
  1 & 493 \\
  2 & 365 \\
  5 & 135 \\
  6 & 28 \\
  4 & 11 \\
  0 & 2 
\end{tabular}
\begin{tabular}{r|l}
\multicolumn{2}{c}{$n=10$\quad$m=14$}\\
 $\mathcal{G}$ & count \\ \hline
  0 & 2331 \\
  3 & 1902 \\
  4 & 957 \\
  7 & 273 \\
  6 & 132 \\
  5 & 122 \\
  1 & 87 \\
  8 & 52 \\
  9 & 25 \\
  2 & 13 \\
  10 & 4 
\end{tabular}
\begin{tabular}{r|l}
\multicolumn{2}{c}{$n=10$\quad$m=15$}\\
 $\mathcal{G}$ & count \\ \hline
  1 & 9615 \\
  5 & 4776 \\
  2 & 3222 \\
  6 & 2183 \\
  8 & 1311 \\
  7 & 754 \\
  4 & 531 \\
  9 & 351 \\
  10 & 264 \\
  0 & 171 \\
  3 & 112 \\
  11 & 70 \\
  12 & 10 
\end{tabular}
\begin{tabular}{r|l}
\multicolumn{2}{c}{$n=10$\quad$m=16$}\\
 $\mathcal{G}$ & count \\ \hline
  3 & 48854 \\
  0 & 8397 \\
  4 & 2818 \\
  7 & 2172 \\
  6 & 1784 \\
  9 & 1426 \\
  8 & 1261 \\
  1 & 648 \\
  10 & 632 \\
  5 & 527 \\
  11 & 325 \\
  2 & 229 \\
  12 & 93 \\
  13 & 3 
\end{tabular}
\begin{tabular}{r|l}
\multicolumn{2}{c}{$n=10$\quad$m=17$}\\
 $\mathcal{G}$ & count \\ \hline
  1 & 99738 \\
  5 & 29910 \\
  6 & 8095 \\
  2 & 4757 \\
  0 & 4263 \\
  7 & 4179 \\
  8 & 3755 \\
  4 & 3710 \\
  9 & 1719 \\
  10 & 1579 \\
  11 & 565 \\
  12 & 226 \\
  3 & 70 \\
  13 & 27 
\end{tabular}
\begin{tabular}{r|l}
\multicolumn{2}{c}{$n=10$\quad$m=18$}\\
 $\mathcal{G}$ & count \\ \hline
  3 & 233123 \\
  0 & 29690 \\
  6 & 16830 \\
  7 & 11395 \\
  8 & 8070 \\
  9 & 5677 \\
  4 & 4450 \\
  10 & 2876 \\
  5 & 1846 \\
  11 & 1304 \\
  2 & 1030 \\
  1 & 490 \\
  12 & 476 \\
  13 & 105 \\
  14 & 2 
\end{tabular}
\begin{tabular}{r|l}
\multicolumn{2}{c}{$n=10$\quad$m=19$}\\
 $\mathcal{G}$ & count \\ \hline
  1 & 370853 \\
  5 & 87675 \\
  0 & 18038 \\
  6 & 12653 \\
  7 & 9044 \\
  4 & 8029 \\
  8 & 7747 \\
  2 & 5099 \\
  9 & 5028 \\
  10 & 3806 \\
  11 & 1515 \\
  12 & 607 \\
  13 & 103 \\
  3 & 101 \\
  14 & 10 
\end{tabular}
\begin{tabular}{r|l}
\multicolumn{2}{c}{$n=10$\quad$m=20$}\\
 $\mathcal{G}$ & count \\ \hline
  3 & 497386 \\
  6 & 89080 \\
  0 & 85335 \\
  7 & 35922 \\
  8 & 24484 \\
  9 & 13172 \\
  5 & 9740 \\
  10 & 7185 \\
  4 & 4834 \\
  11 & 3236 \\
  2 & 2620 \\
  12 & 1215 \\
  1 & 408 \\
  13 & 231 \\
  14 & 28 
\end{tabular}
\begin{tabular}{r|l}
\multicolumn{2}{c}{$n=10$\quad$m=21$}\\
 $\mathcal{G}$ & count \\ \hline
  1 & 681964 \\
  5 & 196118 \\
  0 & 30168 \\
  8 & 17220 \\
  7 & 15294 \\
  9 & 14919 \\
  6 & 11927 \\
  10 & 11288 \\
  4 & 8935 \\
  11 & 6667 \\
  2 & 5792 \\
  12 & 3046 \\
  13 & 827 \\
  3 & 182 \\
  14 & 165 \\
  15 & 5 \\
  16 & 2 
\end{tabular}
\begin{tabular}{r|l}
\multicolumn{2}{c}{$n=10$\quad$m=22$}\\
 $\mathcal{G}$ & count \\ \hline
  3 & 625428 \\
  0 & 186531 \\
  6 & 174645 \\
  7 & 60915 \\
  8 & 40747 \\
  9 & 22049 \\
  5 & 15571 \\
  10 & 14536 \\
  11 & 9439 \\
  12 & 5467 \\
  4 & 4511 \\
  2 & 4315 \\
  13 & 2138 \\
  14 & 520 \\
  1 & 246 \\
  15 & 58 
\end{tabular}
\begin{tabular}{r|l}
\multicolumn{2}{c}{$n=10$\quad$m=23$}\\
 $\mathcal{G}$ & count \\ \hline
  1 & 724676 \\
  5 & 312881 \\
  9 & 31990 \\
  8 & 30896 \\
  7 & 23683 \\
  0 & 23059 \\
  10 & 21243 \\
  11 & 14501 \\
  12 & 9004 \\
  2 & 8889 \\
  6 & 8679 \\
  4 & 7312 \\
  13 & 4810 \\
  14 & 2071 \\
  3 & 418 \\
  15 & 301 \\
  16 & 17 
\end{tabular}
\begin{tabular}{r|l}
\multicolumn{2}{c}{$n=10$\quad$m=24$}\\
 $\mathcal{G}$ & count \\ \hline
  3 & 505007 \\
  0 & 321373 \\
  6 & 134849 \\
  7 & 71869 \\
  8 & 40368 \\
  9 & 23980 \\
  10 & 17980 \\
  11 & 14795 \\
  12 & 9981 \\
  13 & 6396 \\
  2 & 5744 \\
  5 & 5686 \\
  14 & 3282 \\
  4 & 3186 \\
  15 & 1110 \\
  1 & 391 \\
  16 & 155 \\
  17 & 1 
\end{tabular}
\begin{tabular}{r|l}
\multicolumn{2}{c}{$n=10$\quad$m=25$}\\
 $\mathcal{G}$ & count \\ \hline
  1 & 439022 \\
  5 & 348904 \\
  9 & 44514 \\
  8 & 42521 \\
  7 & 27873 \\
  10 & 24627 \\
  11 & 18494 \\
  12 & 13398 \\
  2 & 12039 \\
  13 & 9345 \\
  6 & 9253 \\
  0 & 7577 \\
  4 & 5511 \\
  14 & 5280 \\
  15 & 2205 \\
  3 & 1020 \\
  16 & 563 \\
  17 & 41 
\end{tabular}
\begin{tabular}{r|l}
\multicolumn{2}{c}{$n=10$\quad$m=26$}\\
 $\mathcal{G}$ & count \\ \hline
  0 & 362688 \\
  3 & 247515 \\
  6 & 52019 \\
  7 & 46894 \\
  8 & 21152 \\
  9 & 13210 \\
  10 & 12002 \\
  11 & 11465 \\
  12 & 9514 \\
  13 & 7473 \\
  2 & 6628 \\
  14 & 5344 \\
  15 & 2861 \\
  4 & 2046 \\
  16 & 1106 \\
  5 & 633 \\
  1 & 400 \\
  17 & 179 \\
  18 & 9 
\end{tabular}
\begin{tabular}{r|l}
\multicolumn{2}{c}{$n=10$\quad$m=27$}\\
 $\mathcal{G}$ & count \\ \hline
  5 & 236721 \\
  1 & 151352 \\
  8 & 38745 \\
  9 & 33986 \\
  7 & 26005 \\
  6 & 16832 \\
  10 & 15490 \\
  2 & 12935 \\
  11 & 12502 \\
  12 & 10734 \\
  13 & 8353 \\
  14 & 5932 \\
  4 & 5320 \\
  15 & 3660 \\
  3 & 2501 \\
  16 & 1568 \\
  0 & 850 \\
  17 & 435 \\
  18 & 36 \\
  19 & 1 
\end{tabular}
\begin{tabular}{r|l}
\multicolumn{2}{c}{$n=10$\quad$m=28$}\\
 $\mathcal{G}$ & count \\ \hline
  0 & 220461 \\
  3 & 86579 \\
  7 & 18998 \\
  6 & 13633 \\
  11 & 6181 \\
  8 & 6123 \\
  12 & 5886 \\
  10 & 5514 \\
  13 & 5178 \\
  9 & 4671 \\
  14 & 4618 \\
  2 & 3770 \\
  15 & 3422 \\
  16 & 1953 \\
  4 & 1395 \\
  17 & 791 \\
  1 & 382 \\
  18 & 143 \\
  5 & 79 \\
  19 & 2 
\end{tabular}
\begin{tabular}{r|l}
\multicolumn{2}{c}{$n=10$\quad$m=29$}\\
 $\mathcal{G}$ & count \\ \hline
  5 & 97097 \\
  1 & 30035 \\
  8 & 21255 \\
  7 & 16494 \\
  9 & 14181 \\
  2 & 12660 \\
  6 & 12413 \\
  10 & 5504 \\
  4 & 5496 \\
  12 & 4820 \\
  11 & 4637 \\
  13 & 3839 \\
  14 & 3169 \\
  3 & 2564 \\
  15 & 2545 \\
  16 & 1493 \\
  17 & 827 \\
  18 & 223 \\
  0 & 81 \\
  19 & 29 
\end{tabular}
\begin{tabular}{r|l}
\multicolumn{2}{c}{$n=10$\quad$m=30$}\\
 $\mathcal{G}$ & count \\ \hline
  0 & 72296 \\
  3 & 26122 \\
  7 & 6976 \\
  6 & 3494 \\
  12 & 3193 \\
  11 & 3079 \\
  13 & 3011 \\
  10 & 2927 \\
  14 & 2713 \\
  15 & 2253 \\
  9 & 1957 \\
  2 & 1864 \\
  8 & 1757 \\
  16 & 1420 \\
  17 & 941 \\
  4 & 792 \\
  18 & 368 \\
  1 & 273 \\
  5 & 66 \\
  19 & 63 \\
  20 & 6 
\end{tabular}
\begin{tabular}{r|l}
\multicolumn{2}{c}{$n=10$\quad$m=31$}\\
 $\mathcal{G}$ & count \\ \hline
  5 & 24342 \\
  2 & 9604 \\
  7 & 8050 \\
  8 & 6756 \\
  1 & 4277 \\
  9 & 3669 \\
  4 & 3272 \\
  6 & 2526 \\
  12 & 1262 \\
  10 & 1161 \\
  13 & 1159 \\
  11 & 1020 \\
  14 & 923 \\
  15 & 797 \\
  16 & 707 \\
  3 & 565 \\
  17 & 507 \\
  18 & 267 \\
  19 & 98 \\
  0 & 30 \\
  20 & 7 
\end{tabular}
\begin{tabular}{r|l}
\multicolumn{2}{c}{$n=10$\quad$m=32$}\\
 $\mathcal{G}$ & count \\ \hline
  0 & 16722 \\
  3 & 4879 \\
  7 & 2215 \\
  10 & 1458 \\
  11 & 1211 \\
  12 & 1143 \\
  9 & 1115 \\
  13 & 1065 \\
  14 & 828 \\
  6 & 698 \\
  8 & 629 \\
  15 & 623 \\
  2 & 499 \\
  16 & 413 \\
  4 & 348 \\
  17 & 301 \\
  1 & 154 \\
  18 & 120 \\
  5 & 62 \\
  19 & 50 \\
  20 & 13 \\
  21 & 2 
\end{tabular}
\begin{tabular}{r|l}
\multicolumn{2}{c}{$n=10$\quad$m=33$}\\
 $\mathcal{G}$ & count \\ \hline
  2 & 5453 \\
  5 & 3350 \\
  4 & 1388 \\
  7 & 1329 \\
  8 & 1226 \\
  1 & 977 \\
  9 & 482 \\
  12 & 200 \\
  6 & 187 \\
  13 & 184 \\
  14 & 169 \\
  11 & 168 \\
  15 & 164 \\
  16 & 119 \\
  10 & 118 \\
  17 & 88 \\
  3 & 61 \\
  18 & 42 \\
  19 & 10 \\
  0 & 10 \\
  20 & 3 \\
  21 & 1 
\end{tabular}
\begin{tabular}{r|l}
\multicolumn{2}{c}{$n=10$\quad$m=34$}\\
 $\mathcal{G}$ & count \\ \hline
  0 & 2360 \\
  3 & 735 \\
  9 & 700 \\
  7 & 654 \\
  10 & 521 \\
  11 & 348 \\
  12 & 337 \\
  8 & 287 \\
  13 & 186 \\
  6 & 149 \\
  4 & 129 \\
  14 & 80 \\
  1 & 70 \\
  15 & 54 \\
  5 & 51 \\
  16 & 32 \\
  2 & 16 \\
  17 & 16 \\
  18 & 10 \\
  19 & 7 \\
  20 & 1 
\end{tabular}
\begin{tabular}{r|l}
\multicolumn{2}{c}{$n=10$\quad$m=35$}\\
 $\mathcal{G}$ & count \\ \hline
  2 & 1633 \\
  1 & 277 \\
  4 & 274 \\
  5 & 216 \\
  8 & 100 \\
  7 & 43 \\
  11 & 39 \\
  13 & 28 \\
  9 & 26 \\
  12 & 23 \\
  15 & 22 \\
  14 & 21 \\
  10 & 15 \\
  16 & 13 \\
  3 & 10 \\
  6 & 9 \\
  0 & 6 \\
  18 & 3 \\
  17 & 3 \\
  20 & 1 \\
  19 & 1 
\end{tabular}
\begin{tabular}{r|l}
\multicolumn{2}{c}{$n=10$\quad$m=36$}\\
 $\mathcal{G}$ & count \\ \hline
  0 & 231 \\
  7 & 210 \\
  9 & 151 \\
  3 & 130 \\
  8 & 111 \\
  4 & 96 \\
  5 & 43 \\
  10 & 38 \\
  1 & 24 \\
  6 & 23 \\
  11 & 22 \\
  12 & 15 \\
  2 & 2 \\
  14 & 2 \\
  16 & 1 \\
  13 & 1 
\end{tabular}
\begin{tabular}{r|l}
\multicolumn{2}{c}{$n=10$\quad$m=37$}\\
 $\mathcal{G}$ & count \\ \hline
  2 & 270 \\
  1 & 85 \\
  8 & 15 \\
  4 & 14 \\
  11 & 8 \\
  5 & 7 \\
  10 & 7 \\
  9 & 5 \\
  12 & 5 \\
  7 & 3 \\
  13 & 3 \\
  3 & 2 \\
  6 & 1 \\
  15 & 1 \\
  0 & 1 
\end{tabular}
\begin{tabular}{r|l}
\multicolumn{2}{c}{$n=10$\quad$m=38$}\\
 $\mathcal{G}$ & count \\ \hline
  4 & 73 \\
  7 & 35 \\
  3 & 20 \\
  0 & 19 \\
  6 & 5 \\
  5 & 5 \\
  8 & 4 \\
  1 & 3 \\
  9 & 1 
\end{tabular}
\begin{tabular}{r|l}
\multicolumn{2}{c}{$n=10$\quad$m=39$}\\
 $\mathcal{G}$ & count \\ \hline
  1 & 35 \\
  2 & 28 \\
  0 & 2 \\
  8 & 1 
\end{tabular}
\begin{tabular}{r|l}
\multicolumn{2}{c}{$n=10$\quad$m=40$}\\
 $\mathcal{G}$ & count \\ \hline
  4 & 10 \\
  7 & 7 \\
  3 & 3 \\
  6 & 2 \\
  2 & 2 \\
  0 & 2 
\end{tabular}
\begin{tabular}{r|l}
\multicolumn{2}{c}{$n=10$\quad$m=41$}\\
 $\mathcal{G}$ & count \\ \hline
  1 & 10 \\
  0 & 1 
\end{tabular}
\begin{tabular}{r|l}
\multicolumn{2}{c}{$n=10$\quad$m=42$}\\
 $\mathcal{G}$ & count \\ \hline
  2 & 5 
\end{tabular}
\begin{tabular}{r|l}
\multicolumn{2}{c}{$n=10$\quad$m=43$}\\
 $\mathcal{G}$ & count \\ \hline
  1 & 2 
\end{tabular}
\begin{tabular}{r|l}
\multicolumn{2}{c}{$n=10$\quad$m=44$}\\
 $\mathcal{G}$ & count \\ \hline
  2 & 1 
\end{tabular}
\begin{tabular}{r|l}
\multicolumn{2}{c}{$n=10$\quad$m=45$}\\
 $\mathcal{G}$ & count \\ \hline
  1 & 1 
\end{tabular}

\subsection{11 vertices}

\begin{tabular}{r|l}
\multicolumn{2}{c}{$n=11$\quad$m=11$}\\
 $\mathcal{G}$ & count \\ \hline
  1 & 1 
\end{tabular}
\begin{tabular}{r|l}
\multicolumn{2}{c}{$n=11$\quad$m=12$}\\
 $\mathcal{G}$ & count \\ \hline
  0 & 10 \\
  3 & 1 
\end{tabular}
\begin{tabular}{r|l}
\multicolumn{2}{c}{$n=11$\quad$m=13$}\\
 $\mathcal{G}$ & count \\ \hline
  1 & 136 \\
  2 & 47 \\
  5 & 6 
\end{tabular}
\begin{tabular}{r|l}
\multicolumn{2}{c}{$n=11$\quad$m=14$}\\
 $\mathcal{G}$ & count \\ \hline
  0 & 1187 \\
  3 & 759 \\
  4 & 260 \\
  7 & 30 \\
  5 & 4 \\
  1 & 2 
\end{tabular}
\begin{tabular}{r|l}
\multicolumn{2}{c}{$n=11$\quad$m=15$}\\
 $\mathcal{G}$ & count \\ \hline
  1 & 7135 \\
  2 & 5779 \\
  5 & 2729 \\
  6 & 1029 \\
  4 & 255 \\
  8 & 213 \\
  7 & 146 \\
  0 & 107 \\
  3 & 60 \\
  9 & 36 \\
  11 & 1 \\
  10 & 1 
\end{tabular}
\begin{tabular}{r|l}
\multicolumn{2}{c}{$n=11$\quad$m=16$}\\
 $\mathcal{G}$ & count \\ \hline
  0 & 31432 \\
  3 & 28437 \\
  4 & 14933 \\
  7 & 7564 \\
  6 & 2903 \\
  9 & 2849 \\
  8 & 2465 \\
  5 & 1432 \\
  2 & 898 \\
  10 & 825 \\
  1 & 501 \\
  11 & 213 \\
  12 & 31 \\
  13 & 1 
\end{tabular}
\begin{tabular}{r|l}
\multicolumn{2}{c}{$n=11$\quad$m=17$}\\
 $\mathcal{G}$ & count \\ \hline
  2 & 209340 \\
  1 & 71398 \\
  5 & 28544 \\
  6 & 19357 \\
  8 & 12371 \\
  4 & 8320 \\
  7 & 7869 \\
  10 & 7457 \\
  9 & 6178 \\
  0 & 4876 \\
  11 & 2952 \\
  12 & 1325 \\
  3 & 420 \\
  13 & 105 \\
  14 & 16 
\end{tabular}
\begin{tabular}{r|l}
\multicolumn{2}{c}{$n=11$\quad$m=18$}\\
 $\mathcal{G}$ & count \\ \hline
  0 & 594015 \\
  4 & 308192 \\
  3 & 69801 \\
  7 & 68460 \\
  6 & 48000 \\
  8 & 30106 \\
  5 & 27693 \\
  9 & 27612 \\
  10 & 14439 \\
  11 & 9199 \\
  1 & 6624 \\
  12 & 4581 \\
  2 & 1863 \\
  13 & 1285 \\
  14 & 131 \\
  15 & 1 
\end{tabular}
\begin{tabular}{r|l}
\multicolumn{2}{c}{$n=11$\quad$m=19$}\\
 $\mathcal{G}$ & count \\ \hline
  2 & 2585833 \\
  1 & 141088 \\
  5 & 102880 \\
  6 & 90065 \\
  8 & 72066 \\
  7 & 58706 \\
  9 & 43958 \\
  10 & 35752 \\
  4 & 18204 \\
  11 & 17793 \\
  12 & 10061 \\
  0 & 9984 \\
  3 & 3465 \\
  13 & 3426 \\
  14 & 985 \\
  15 & 28 
\end{tabular}
\begin{tabular}{r|l}
\multicolumn{2}{c}{$n=11$\quad$m=20$}\\
 $\mathcal{G}$ & count \\ \hline
  0 & 3642348 \\
  4 & 2644991 \\
  6 & 181325 \\
  7 & 175769 \\
  3 & 121315 \\
  5 & 116215 \\
  8 & 108203 \\
  9 & 79719 \\
  10 & 50630 \\
  1 & 28513 \\
  11 & 27213 \\
  12 & 12898 \\
  13 & 4350 \\
  2 & 1718 \\
  14 & 1503 \\
  15 & 309 \\
  16 & 7 
\end{tabular}
\begin{tabular}{r|l}
\multicolumn{2}{c}{$n=11$\quad$m=21$}\\
 $\mathcal{G}$ & count \\ \hline
  2 & 12681816 \\
  6 & 301071 \\
  5 & 239894 \\
  7 & 237343 \\
  8 & 198815 \\
  1 & 189095 \\
  9 & 130455 \\
  10 & 87132 \\
  11 & 43171 \\
  4 & 26559 \\
  12 & 20014 \\
  3 & 12758 \\
  0 & 9964 \\
  13 & 5771 \\
  14 & 1579 \\
  15 & 401 \\
  16 & 63 \\
  17 & 2 
\end{tabular}
\begin{tabular}{r|l}
\multicolumn{2}{c}{$n=11$\quad$m=22$}\\
 $\mathcal{G}$ & count \\ \hline
  4 & 11811367 \\
  0 & 10772547 \\
  7 & 393406 \\
  3 & 392043 \\
  6 & 370575 \\
  8 & 323946 \\
  5 & 256131 \\
  9 & 218552 \\
  10 & 144096 \\
  11 & 70863 \\
  1 & 64577 \\
  12 & 33040 \\
  13 & 9711 \\
  14 & 2306 \\
  2 & 1803 \\
  15 & 456 \\
  16 & 69 \\
  17 & 1 
\end{tabular}
\begin{tabular}{r|l}
\multicolumn{2}{c}{$n=11$\quad$m=23$}\\
 $\mathcal{G}$ & count \\ \hline
  2 & 35974508 \\
  6 & 726600 \\
  7 & 629592 \\
  8 & 414230 \\
  5 & 408609 \\
  9 & 312808 \\
  1 & 281902 \\
  10 & 205227 \\
  11 & 110593 \\
  12 & 57136 \\
  4 & 30285 \\
  3 & 28160 \\
  0 & 20197 \\
  13 & 18073 \\
  14 & 4311 \\
  15 & 486 \\
  16 & 61 \\
  17 & 4 
\end{tabular}
\begin{tabular}{r|l}
\multicolumn{2}{c}{$n=11$\quad$m=24$}\\
 $\mathcal{G}$ & count \\ \hline
  4 & 33222437 \\
  0 & 18282530 \\
  3 & 1252117 \\
  7 & 662744 \\
  8 & 642944 \\
  6 & 570069 \\
  9 & 448136 \\
  5 & 325407 \\
  10 & 319532 \\
  11 & 174262 \\
  1 & 131292 \\
  12 & 90885 \\
  13 & 33273 \\
  14 & 10242 \\
  2 & 1471 \\
  15 & 1430 \\
  16 & 126 \\
  17 & 9 
\end{tabular}
\begin{tabular}{r|l}
\multicolumn{2}{c}{$n=11$\quad$m=25$}\\
 $\mathcal{G}$ & count \\ \hline
  2 & 67154824 \\
  6 & 1397128 \\
  7 & 1230010 \\
  8 & 819186 \\
  9 & 667468 \\
  1 & 557734 \\
  5 & 528852 \\
  10 & 469963 \\
  11 & 284714 \\
  12 & 169874 \\
  13 & 72442 \\
  0 & 56796 \\
  3 & 42687 \\
  14 & 26190 \\
  4 & 23717 \\
  15 & 4948 \\
  16 & 727 \\
  17 & 26 
\end{tabular}
\begin{tabular}{r|l}
\multicolumn{2}{c}{$n=11$\quad$m=26$}\\
 $\mathcal{G}$ & count \\ \hline
  4 & 63038406 \\
  0 & 17454171 \\
  3 & 2960262 \\
  8 & 880331 \\
  7 & 788452 \\
  6 & 710817 \\
  9 & 665758 \\
  10 & 534471 \\
  11 & 335510 \\
  5 & 304381 \\
  1 & 301481 \\
  12 & 207626 \\
  13 & 107256 \\
  14 & 46556 \\
  15 & 12934 \\
  16 & 2450 \\
  2 & 1138 \\
  17 & 141 \\
  18 & 2 
\end{tabular}
\begin{tabular}{r|l}
\multicolumn{2}{c}{$n=11$\quad$m=27$}\\
 $\mathcal{G}$ & count \\ \hline
  2 & 86823105 \\
  6 & 2555602 \\
  7 & 1794764 \\
  8 & 1430024 \\
  1 & 1211677 \\
  9 & 1177935 \\
  10 & 909291 \\
  11 & 577782 \\
  5 & 522004 \\
  12 & 375352 \\
  13 & 195447 \\
  0 & 114544 \\
  14 & 96942 \\
  3 & 71123 \\
  15 & 31953 \\
  4 & 10959 \\
  16 & 8269 \\
  17 & 753 \\
  18 & 45 
\end{tabular}
\begin{tabular}{r|l}
\multicolumn{2}{c}{$n=11$\quad$m=28$}\\
 $\mathcal{G}$ & count \\ \hline
  4 & 79703160 \\
  0 & 9064209 \\
  3 & 5647841 \\
  8 & 889351 \\
  7 & 796384 \\
  9 & 756856 \\
  6 & 734744 \\
  10 & 683967 \\
  11 & 497995 \\
  1 & 496804 \\
  12 & 355499 \\
  5 & 253262 \\
  13 & 228394 \\
  14 & 130387 \\
  15 & 57487 \\
  16 & 19367 \\
  17 & 3160 \\
  2 & 894 \\
  18 & 311 \\
  19 & 3 
\end{tabular}
\begin{tabular}{r|l}
\multicolumn{2}{c}{$n=11$\quad$m=29$}\\
 $\mathcal{G}$ & count \\ \hline
  2 & 79352188 \\
  6 & 4110963 \\
  1 & 2404976 \\
  7 & 2035620 \\
  8 & 1905090 \\
  9 & 1530901 \\
  10 & 1281469 \\
  11 & 808695 \\
  12 & 534827 \\
  5 & 385921 \\
  13 & 325133 \\
  14 & 194062 \\
  0 & 140569 \\
  3 & 100554 \\
  15 & 92201 \\
  16 & 37692 \\
  17 & 8648 \\
  4 & 3197 \\
  18 & 1301 \\
  19 & 39 
\end{tabular}
\begin{tabular}{r|l}
\multicolumn{2}{c}{$n=11$\quad$m=30$}\\
 $\mathcal{G}$ & count \\ \hline
  4 & 67062913 \\
  3 & 7484052 \\
  0 & 2938840 \\
  1 & 775943 \\
  10 & 766077 \\
  7 & 745591 \\
  9 & 730037 \\
  8 & 726002 \\
  6 & 634320 \\
  11 & 624885 \\
  12 & 466357 \\
  13 & 340462 \\
  14 & 225212 \\
  5 & 211716 \\
  15 & 128753 \\
  16 & 61474 \\
  17 & 19223 \\
  18 & 3906 \\
  2 & 1966 \\
  19 & 297 \\
  20 & 2 
\end{tabular}
\begin{tabular}{r|l}
\multicolumn{2}{c}{$n=11$\quad$m=31$}\\
 $\mathcal{G}$ & count \\ \hline
  2 & 50977458 \\
  6 & 4973298 \\
  1 & 3690412 \\
  7 & 1896931 \\
  8 & 1829597 \\
  9 & 1423453 \\
  10 & 1322544 \\
  11 & 818581 \\
  12 & 539041 \\
  13 & 367048 \\
  14 & 240021 \\
  5 & 239686 \\
  15 & 141958 \\
  0 & 101367 \\
  3 & 78278 \\
  16 & 74371 \\
  17 & 27668 \\
  18 & 7130 \\
  4 & 1132 \\
  19 & 886 \\
  20 & 34 
\end{tabular}
\begin{tabular}{r|l}
\multicolumn{2}{c}{$n=11$\quad$m=32$}\\
 $\mathcal{G}$ & count \\ \hline
  4 & 38501464 \\
  3 & 6381586 \\
  1 & 932652 \\
  0 & 870672 \\
  10 & 821872 \\
  11 & 729933 \\
  9 & 720810 \\
  7 & 611220 \\
  8 & 569964 \\
  12 & 544636 \\
  6 & 427047 \\
  13 & 417659 \\
  14 & 290765 \\
  15 & 185427 \\
  5 & 177357 \\
  16 & 110851 \\
  17 & 47791 \\
  18 & 16364 \\
  2 & 4657 \\
  19 & 2811 \\
  20 & 220 \\
  21 & 3 
\end{tabular}
\begin{tabular}{r|l}
\multicolumn{2}{c}{$n=11$\quad$m=33$}\\
 $\mathcal{G}$ & count \\ \hline
  2 & 20907514 \\
  6 & 4569451 \\
  1 & 3824767 \\
  7 & 1241034 \\
  8 & 1236178 \\
  10 & 1212368 \\
  9 & 1156068 \\
  11 & 812578 \\
  12 & 576716 \\
  13 & 443773 \\
  14 & 329142 \\
  15 & 229900 \\
  5 & 183368 \\
  16 & 156314 \\
  17 & 85549 \\
  0 & 64791 \\
  18 & 38121 \\
  3 & 35515 \\
  19 & 11044 \\
  20 & 1513 \\
  4 & 1098 \\
  21 & 63 
\end{tabular}
\begin{tabular}{r|l}
\multicolumn{2}{c}{$n=11$\quad$m=34$}\\
 $\mathcal{G}$ & count \\ \hline
  4 & 15485426 \\
  3 & 3880870 \\
  1 & 592751 \\
  10 & 535750 \\
  11 & 525832 \\
  9 & 451027 \\
  12 & 424563 \\
  13 & 367264 \\
  0 & 335609 \\
  8 & 324122 \\
  14 & 308077 \\
  7 & 293167 \\
  15 & 248663 \\
  16 & 196989 \\
  6 & 146357 \\
  17 & 134352 \\
  5 & 105608 \\
  18 & 83282 \\
  19 & 36744 \\
  20 & 9810 \\
  2 & 4831 \\
  21 & 1105 \\
  22 & 45 
\end{tabular}
\begin{tabular}{r|l}
\multicolumn{2}{c}{$n=11$\quad$m=35$}\\
 $\mathcal{G}$ & count \\ \hline
  2 & 4479125 \\
  6 & 3240081 \\
  1 & 2715569 \\
  10 & 718191 \\
  9 & 675024 \\
  8 & 535344 \\
  7 & 480721 \\
  11 & 480365 \\
  12 & 350944 \\
  13 & 282359 \\
  14 & 240526 \\
  15 & 194792 \\
  16 & 158613 \\
  5 & 156270 \\
  17 & 119180 \\
  18 & 86051 \\
  19 & 51766 \\
  0 & 43594 \\
  20 & 22469 \\
  3 & 12692 \\
  21 & 5811 \\
  4 & 1309 \\
  22 & 661 \\
  23 & 12 
\end{tabular}
\begin{tabular}{r|l}
\multicolumn{2}{c}{$n=11$\quad$m=36$}\\
 $\mathcal{G}$ & count \\ \hline
  4 & 4226043 \\
  3 & 1962342 \\
  11 & 234021 \\
  10 & 218739 \\
  12 & 203854 \\
  13 & 195048 \\
  14 & 184044 \\
  9 & 177427 \\
  15 & 164044 \\
  0 & 152327 \\
  16 & 140338 \\
  8 & 139094 \\
  1 & 132405 \\
  7 & 116058 \\
  17 & 113257 \\
  18 & 87467 \\
  19 & 56874 \\
  5 & 46944 \\
  20 & 30377 \\
  6 & 16647 \\
  21 & 12094 \\
  2 & 5828 \\
  22 & 2857 \\
  23 & 267 \\
  24 & 5 
\end{tabular}
\begin{tabular}{r|l}
\multicolumn{2}{c}{$n=11$\quad$m=37$}\\
 $\mathcal{G}$ & count \\ \hline
  6 & 1566799 \\
  1 & 1423102 \\
  2 & 423002 \\
  10 & 161130 \\
  9 & 136951 \\
  8 & 113856 \\
  5 & 110205 \\
  11 & 99482 \\
  7 & 92784 \\
  12 & 78265 \\
  13 & 68886 \\
  14 & 64609 \\
  15 & 56699 \\
  16 & 49244 \\
  17 & 41837 \\
  18 & 34123 \\
  19 & 25563 \\
  0 & 24112 \\
  20 & 15521 \\
  21 & 7646 \\
  3 & 2724 \\
  22 & 2689 \\
  4 & 1374 \\
  23 & 571 \\
  24 & 54 
\end{tabular}
\begin{tabular}{r|l}
\multicolumn{2}{c}{$n=11$\quad$m=38$}\\
 $\mathcal{G}$ & count \\ \hline
  3 & 769324 \\
  4 & 755623 \\
  11 & 83640 \\
  10 & 78458 \\
  0 & 69077 \\
  12 & 67784 \\
  13 & 65266 \\
  8 & 61734 \\
  14 & 60896 \\
  9 & 60077 \\
  7 & 49766 \\
  15 & 47646 \\
  16 & 36261 \\
  17 & 25815 \\
  18 & 17572 \\
  5 & 15971 \\
  19 & 10609 \\
  20 & 5632 \\
  1 & 4328 \\
  2 & 2950 \\
  21 & 2531 \\
  6 & 972 \\
  22 & 888 \\
  23 & 231 \\
  24 & 40 \\
  25 & 2 
\end{tabular}
\begin{tabular}{r|l}
\multicolumn{2}{c}{$n=11$\quad$m=39$}\\
 $\mathcal{G}$ & count \\ \hline
  1 & 475360 \\
  6 & 387682 \\
  5 & 56853 \\
  2 & 29527 \\
  10 & 11082 \\
  0 & 9891 \\
  14 & 9606 \\
  13 & 9421 \\
  8 & 9365 \\
  12 & 8972 \\
  15 & 8870 \\
  7 & 8495 \\
  11 & 8278 \\
  16 & 8102 \\
  9 & 7109 \\
  17 & 7065 \\
  18 & 5254 \\
  19 & 3584 \\
  20 & 1875 \\
  4 & 1122 \\
  21 & 795 \\
  3 & 318 \\
  22 & 246 \\
  23 & 38 \\
  24 & 8 \\
  25 & 2 
\end{tabular}
\begin{tabular}{r|l}
\multicolumn{2}{c}{$n=11$\quad$m=40$}\\
 $\mathcal{G}$ & count \\ \hline
  3 & 198391 \\
  4 & 82083 \\
  0 & 29314 \\
  10 & 25673 \\
  7 & 20388 \\
  11 & 20198 \\
  8 & 19461 \\
  12 & 15079 \\
  9 & 13486 \\
  13 & 13403 \\
  14 & 10070 \\
  15 & 5965 \\
  5 & 4494 \\
  16 & 3559 \\
  17 & 2141 \\
  18 & 1166 \\
  2 & 769 \\
  19 & 769 \\
  20 & 377 \\
  6 & 276 \\
  21 & 189 \\
  1 & 155 \\
  22 & 50 \\
  23 & 15 \\
  24 & 1 
\end{tabular}
\begin{tabular}{r|l}
\multicolumn{2}{c}{$n=11$\quad$m=41$}\\
 $\mathcal{G}$ & count \\ \hline
  6 & 74511 \\
  1 & 66047 \\
  5 & 30115 \\
  0 & 3027 \\
  2 & 2611 \\
  14 & 2100 \\
  13 & 1751 \\
  12 & 1723 \\
  15 & 1678 \\
  16 & 1411 \\
  11 & 1305 \\
  10 & 1090 \\
  17 & 1003 \\
  8 & 996 \\
  4 & 799 \\
  9 & 792 \\
  7 & 717 \\
  18 & 560 \\
  19 & 215 \\
  20 & 107 \\
  3 & 54 \\
  21 & 42 \\
  22 & 13 \\
  23 & 4 
\end{tabular}
\begin{tabular}{r|l}
\multicolumn{2}{c}{$n=11$\quad$m=42$}\\
 $\mathcal{G}$ & count \\ \hline
  3 & 34191 \\
  0 & 10896 \\
  10 & 6421 \\
  7 & 4752 \\
  4 & 3885 \\
  8 & 3450 \\
  11 & 3029 \\
  9 & 2326 \\
  12 & 1939 \\
  13 & 1703 \\
  14 & 853 \\
  5 & 502 \\
  15 & 439 \\
  16 & 237 \\
  2 & 230 \\
  17 & 189 \\
  18 & 97 \\
  6 & 85 \\
  19 & 61 \\
  1 & 51 \\
  20 & 23 \\
  21 & 12 \\
  22 & 1 
\end{tabular}
\begin{tabular}{r|l}
\multicolumn{2}{c}{$n=11$\quad$m=43$}\\
 $\mathcal{G}$ & count \\ \hline
  5 & 11399 \\
  6 & 8345 \\
  1 & 4479 \\
  0 & 636 \\
  2 & 479 \\
  14 & 378 \\
  12 & 372 \\
  4 & 332 \\
  13 & 332 \\
  11 & 312 \\
  15 & 262 \\
  8 & 208 \\
  9 & 181 \\
  10 & 180 \\
  16 & 129 \\
  7 & 99 \\
  17 & 60 \\
  3 & 25 \\
  18 & 25 \\
  19 & 6 \\
  20 & 3 \\
  21 & 1 
\end{tabular}
\begin{tabular}{r|l}
\multicolumn{2}{c}{$n=11$\quad$m=44$}\\
 $\mathcal{G}$ & count \\ \hline
  3 & 4500 \\
  0 & 2952 \\
  10 & 606 \\
  8 & 547 \\
  9 & 473 \\
  7 & 444 \\
  4 & 241 \\
  11 & 146 \\
  12 & 55 \\
  2 & 53 \\
  13 & 51 \\
  6 & 44 \\
  5 & 32 \\
  14 & 27 \\
  1 & 22 \\
  16 & 17 \\
  15 & 14 \\
  17 & 12 \\
  18 & 5 \\
  19 & 2 \\
  20 & 1 
\end{tabular}
\begin{tabular}{r|l}
\multicolumn{2}{c}{$n=11$\quad$m=45$}\\
 $\mathcal{G}$ & count \\ \hline
  5 & 2444 \\
  6 & 462 \\
  1 & 175 \\
  4 & 129 \\
  2 & 128 \\
  0 & 104 \\
  11 & 48 \\
  8 & 32 \\
  9 & 30 \\
  12 & 26 \\
  10 & 22 \\
  7 & 18 \\
  13 & 17 \\
  3 & 15 \\
  14 & 11 
\end{tabular}
\begin{tabular}{r|l}
\multicolumn{2}{c}{$n=11$\quad$m=46$}\\
 $\mathcal{G}$ & count \\ \hline
  0 & 550 \\
  3 & 523 \\
  7 & 58 \\
  8 & 55 \\
  9 & 34 \\
  6 & 28 \\
  2 & 26 \\
  10 & 12 \\
  4 & 9 \\
  1 & 4 \\
  12 & 2 \\
  5 & 1 \\
  15 & 1 \\
  11 & 1 
\end{tabular}
\begin{tabular}{r|l}
\multicolumn{2}{c}{$n=11$\quad$m=47$}\\
 $\mathcal{G}$ & count \\ \hline
  5 & 341 \\
  4 & 64 \\
  2 & 23 \\
  0 & 18 \\
  6 & 10 \\
  1 & 7 \\
  3 & 3 \\
  7 & 1 
\end{tabular}
\begin{tabular}{r|l}
\multicolumn{2}{c}{$n=11$\quad$m=48$}\\
 $\mathcal{G}$ & count \\ \hline
  0 & 92 \\
  3 & 63 \\
  2 & 7 \\
  6 & 4 \\
  7 & 2 \\
  5 & 2 \\
  1 & 2 
\end{tabular}
\begin{tabular}{r|l}
\multicolumn{2}{c}{$n=11$\quad$m=49$}\\
 $\mathcal{G}$ & count \\ \hline
  5 & 30 \\
  4 & 27 \\
  3 & 4 \\
  0 & 4 \\
  7 & 1 \\
  1 & 1 
\end{tabular}
\begin{tabular}{r|l}
\multicolumn{2}{c}{$n=11$\quad$m=50$}\\
 $\mathcal{G}$ & count \\ \hline
  0 & 16 \\
  3 & 10 
\end{tabular}
\begin{tabular}{r|l}
\multicolumn{2}{c}{$n=11$\quad$m=51$}\\
 $\mathcal{G}$ & count \\ \hline
  4 & 8 \\
  3 & 2 \\
  2 & 1 
\end{tabular}
\begin{tabular}{r|l}
\multicolumn{2}{c}{$n=11$\quad$m=52$}\\
 $\mathcal{G}$ & count \\ \hline
  0 & 5 
\end{tabular}
\begin{tabular}{r|l}
\multicolumn{2}{c}{$n=11$\quad$m=53$}\\
 $\mathcal{G}$ & count \\ \hline
  3 & 2 
\end{tabular}
\begin{tabular}{r|l}
\multicolumn{2}{c}{$n=11$\quad$m=54$}\\
 $\mathcal{G}$ & count \\ \hline
  0 & 1 
\end{tabular}
\begin{tabular}{r|l}
\multicolumn{2}{c}{$n=11$\quad$m=55$}\\
 $\mathcal{G}$ & count \\ \hline
  2 & 1 
\end{tabular}


\end{document}